\numberwithin{equation}{section}
\theoremstyle{plain}
\newtheorem{thm}{Theorem}[section]
\newtheorem{lem}[thm]{Lemma}
\newtheorem{cor}[thm]{Corollary}
\newtheorem{prop}[thm]{Proposition}
\theoremstyle{definition}
\newtheorem{ex}[thm]{Example}
\begin{document}
\title{Time Sensitive Analysis of Independent and Stationary Increment Processes}
\author{Jewgeni H. Dshalalow\\
{\color{blue}eugene@fit.edu}
\and
Ryan T. White\\
{\color{blue}rwhite2009@fit.edu}}
\date{}
\maketitle

\vspace{-1cm}
\begin{center}
Department of Mathematical Sciences\\
College of Science\\
Florida Institute of Technology\\
Melbourne, Florida 32901, USA
\end{center}

\begin{abstract}
We study the behavior of independent and stationary increments jump processes as they approach fixed thresholds. The exact crossing time is unavailable because the real-time information about successive jumps is unknown. Instead, the underlying process $A(t)$ is observed only upon a third-party independent point process $\{\tau_n\}$. The observed time series $\{A(\tau_n)\}$ presents crude, delayed data. The crossing is first observed upon one of the observations, denoted $\tau_\nu$. We develop and further explore a new technique to revive the real-time paths of $A(t)$ for all $t$ belonging to an interval before the pre-crossing observation, $\left[0,\tau_{\nu-1}\right)$, or between the observations just before and just after the crossing, $\left[\tau_{\nu-1},\tau_\nu\right)$, as a joint Laplace-Stieltjes transform and probability generating function of $A(\tau_{\nu-1})$, $A(\tau_\nu)$, $\tau_{\nu-1}$, and $\tau_\nu$. Joint probability distributions are obtained from the transforms in a tractable form and they are applied to modeling of stochastic networks under cyber attacks by accurately predicting their crash.

\textbf{Keywords}: Independent and stationary increments processes, stochastic networks, fluctuation theory, marked point processes, ruin time, exit time, first passage time, random walks, communication networks

\textbf{AMS Subject Classification}: 60G50, 60G51, 60G52, 60G55, 60G57, 60K05, 60K35, 60K40, 60G25, 90B18, 90B15, 90B25
\end{abstract}

\section{Introduction}

Among various problems related to fluctuations of independent and stationary increments (ISI) processes, there are many where an underlying process attempts to escape from a fixed set, and cannot be observed precisely at the moment of crossing, but with some delay. The first time upon which an observer can ascertain the crossing will be referred to as a first observed crossing (or passage) time. Such a delay is inevitable due to the limited observational capabilities allowed by a third-party point process, thus obscuring information about the first passage time and the location of the process upon the first passage time.

In recent research by the authors \cite{DshalalowWhite13,DshalalowWhite14}, efforts were made to derive probabilistic information about the location of such processes and times upon the observation immediately before and after the crossing, as well as locations of the process and times at various other observation epochs. The authors derived joint Laplace-Stieltjes transforms (LST) and probability generating functions (PGF) of these positions and times, which led to marginal distributions and moments for each of these random variables (r.v.'s).

In the present paper we use an entirely different and seemingly more efficient approach to refine predicted data that offers potential for stochastic control. To be more specific, suppose we collect the location of a process $A(t)$ at times $\{\tau_1,\tau_2,...\}$ and let $\nu$ be a random index when the crossing is first observed: that is, $\tau_\nu$ is the first observed passage time. One of the intervals of interest is $\left[\tau_{\nu-1},\tau_\nu\right)$ a random vicinity of time before and after the first passage time, in which
we can glean some information on $A(t)$ and obtain the joint distribution of the process at any given time $t\in\left[\tau_{\nu-1},\tau_\nu\right)$ and at $\tau_\nu$ (along with other notable characteristics), which proved elusive under the previous strategy.

This paper further focuses on a Poisson point process $\{t_k\}\in\mathbb{R}_+$ with associated position-independent marking $\{a_k\}\subseteq\mathbb{N}$ taking the role of $A(t)$, which is observed upon a delayed renewal process ${\{\tau_n, n=0,1,2,...\}}$ forming a sequence $A_n=A(\tau_n)$, which is an embedded marked ISI process. With ${\nu=\inf\{n:A_n>M\}}$ (where $M$ is a fixed positive integer), $\tau_\nu$ is the first observed crossing time and $A_\nu$ is the position of $A(t)$ upon this time.

We study the following two joint LST's and PGF's of the position of the process $A(t)$ at times $t$, $\tau_{\nu-1}$, and $\tau_\nu$ as well as the latter two times themselves. The two functionals are restricted to time intervals before the pre-crossing time, $\left[0,\tau_{\nu-1}\right)$, and between observations before and after the crossing, $\left[\tau_{\nu-1},\tau_\nu\right)$,

\begin{align}
G_1(t,u,v,w,x,y) &= E\left[u^{A_{\nu-1}}v^{A_\nu}e^{-w\tau_{\nu-1}-x\tau_\nu}y^{A(t)}\mathbf{1}_{\{t<\tau_{\nu-1}\}}\right]\label{G1}
\\ G_2(t,u,v,w,x,y) &= E\left[u^{A_{\nu-1}}v^{A_\nu}e^{-w\tau_{\nu-1}-x\tau_\nu}y^{A(t)}\mathbf{1}_{\{\tau_{\nu-1}\leq t<\tau_\nu\}}\right].\label{G2}
\end{align}

To address this problem in a more general fashion we start with a generic ISI process $A(t)$ and two independent random variables $T$ and $\Delta$, which are also independent of $A(t)$. The latter condition is necessary to derive explicit functionals, but $\tau_{\nu-1}$ and $\tau_\nu$ (assuming the roles of $T$ and $T+\Delta$) are neither mutually independent nor independent of $A(t)$. For that reason, we ``expand'' the above functionals in stochastic series over $\tau_n$'s so that each term of the series satisfies the required independence assumptions. The series converge to useful expressions for \eqref{G1} and \eqref{G2}. Herewith we arrive at closed form expressions in terms of the Laplace transforms of $G_1$ and $G_2$. The total tractability of these expressions is illustrated via special cases.

Applications of ISI processes under delayed observation are widespread. The authors previously investigated strategic networks under random attacks in which random quantities of nodes (along with associated weights) get disabled, modeled as an ISI process in 2D. The well-being of the network is monitored by delayed observations that offer a more realistic scenario than real-time information. If a certain number of nodes or weights become paralyzed, the entire network is disabled. In order to prevent such an event, we predicted the time at which a cumulative damage process exits from a rectangle and its excess value. With the new, time-sensitive, techniques, we largely improve our earlier (``time-insensitive'') models \cite{DshalalowWhite13,DshalalowWhite14}. The present work establishes a more general and systematic approach of stochastic interpolation that can naturally lead to further embellishments, e.g. with $A(t)$ being multivariate and/or crossing multiple thresholds.

We note that our work falls into the class of the theory of fluctuations of stochastic processes that historically stem from the random walk behavior of a particle moving throughout a discrete deterministic grid and attempting to escape from a fixed set as in \cite{Bingham2001}. Later on, the notion of fluctuations was utilized to other stochastic processes like renewal, recurrent, and semi-Markov that were to cross a fixed threshold. (See Tak\'{a}cs' seminal paper \cite{Takacs1978}.) This terminology, as well as the utility of a process leaving a fixed set, was found in various work ranging from insurance, risk, and reliability \cite{Borovkov08} to finance \cite{Dshalalow:2005bh} to astronomy, biology, physics, and quantum physics \cite{Hida1995,Redner2001}, and to networks \cite{DshalalowWhite13,DshalalowWhite14}. Topically, papers on fluctuations of marked Poisson and Cox processes \cite{Agarwal:2004fk}, renewal and recurrent processes \cite{Takacs1978}, and ISI processes \cite{Kadankov2005} are more pertinent to our present work. 

The time-sensitive tools can be utilized in other time-insensitive models, in particular in stochastic games, queueing, and finance \cite{Dshalalow:2005bh,Dshalalow2009}. In some earlier forms, time-sensitive analysis was known in queueing \cite{DshalalowAlMatar11} (under the name of semi-regenerative techniques) and in other stochastic processes such as Cox \cite{Agarwal:2004fk} to interpolate stationary probabilities of embedded processes.

The layout of the article is as follows. In Section~\ref{GeneralResultsSection}, we state and prove some auxiliary results for general ISI processes, which are of separate interest. Namely, we consider an ISI process $A(t)$, observed at random times $T$ and $T+\Delta$ where $T$ and $\Delta$ are independent r.v.'s, which are also independent of $A(t)$. We derive expressions for functionals similar to \eqref{G1} and \eqref{G2}, but based on times $T$ and $T+\Delta$ rather than $\tau_{\nu-1}$ and $\tau_\nu$. As already mentioned, the results of Section~\ref{GeneralResultsSection} do not directly apply if $T$ and $T+\Delta$ are $\tau_{\nu-1}$ and $\tau_\nu$ because the latter are dependent in general. To overcome this obstacle, in Section~\ref{DelayedPassageSection} we reduce the functionals of Section~\ref{GeneralResultsSection} to the case when $A(t)$ is a marked Poisson process observed by a renewal process and use a series expansion technique to arrive at Laplace transforms of functionals of the type \eqref{G1} and \eqref{G2}, which we then illustrate explicitly in Section~\ref{SpecialCaseResultsSection} by examples.

\section{Some Important Results for General ISI Processes}\label{GeneralResultsSection}

Suppose $A(t)$ is an independent and stationary increment (ISI) stochastic process valued in $\{0,1,2,...\}$ defined on a filtered probability space $\left(\Omega,\mathcal{F},\left(\mathcal{F}_t\right)_{t\geq 0},P\right)$. Suppose $T$ and $\Delta$ are nonnegative, independent r.v.'s, which are independent of the filtration $\left(\mathcal{F}_t\right)$. We will seek functionals
\begin{align}
F_1(t,u,v,w,x,y) &= E\left[u^{A(T)}v^{A(T+\Delta)}e^{-wT-x\Delta}y^{A(t)}\mathbf{1}_{\{t<T\}}\right]\label{F1}
\\ F_2(t,u,v,w,x,y) &= E\left[u^{A(T)}v^{A(T+\Delta)}e^{-wT-x\Delta}y^{A(t)}\mathbf{1}_{\{T\leq t<T+\Delta\}}\right]\label{F2}
\end{align}
where $t\geq 0$ and
\begin{align*}
u,v,y\in\left\{z\in\mathbb{C}:\|z\|\leq 1\right\}, w,x\in\left\{z\in\mathbb{C}:Re(z)\geq 0\right\}
\end{align*}
Note \eqref{F1}-\eqref{F2} are somewhat similar to \eqref{G1}-\eqref{G2} with $T$ and $\Delta$ replacing $\tau_{\nu-1}$ and $\tau_\nu-\tau_{\nu-1}$. The key dissimilarity between the two is that $\tau_{\nu-1}$ and $\tau_\nu$ are not independent of the filtration $\left(\mathcal{F}_t\right)$.

For an $L^1$-function or functional $f(t)$, denote its Laplace transform by $f^*(\theta)=\mathcal{L}_t\{f(t)\}(\theta)$.

\begin{thm}\thlabel{FirstTheorem}
The functional $F_1(t,u,v,w,x,y)$ of the ISI process $A(t)$ on the trace $\sigma$-algebra ${\mathcal{F}\cap\{t<T\}}$ where $T$ and $\Delta$ are independent of each other and of the filtration $(\mathcal{F}_t)$ satisfies
\begin{align}
F_1^*(\theta,u,v,w,x,y)=E\left[e^{-wT}\psi(uvy,uv,T)\right]E\left[e^{-x\Delta}\varphi(v,\Delta)\right]\label{F1star}
\end{align}
where
\begin{align}
\varphi(a,s)=E\left[a^{A(s)}\right]\label{AsIncrement}
\end{align}
is the PGF of the process at time $s$, and
\begin{align}
\psi(b,c,\delta)=\left(e^{-\theta(\cdot)}\varphi(b,\cdot)\right)*\varphi(c,\cdot)(\delta)=\int_0^\delta \varphi(b,t)\varphi(c,\delta-t)\,dt\label{psi}
\end{align}
is the convolution of $e^{-\theta(\cdot)}\varphi(b,\cdot)$ with $\varphi(c,\cdot)$ valued at $\delta$.
\end{thm}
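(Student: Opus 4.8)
The plan is to compute $F_1(t,u,v,w,x,y)$ directly by conditioning on the values of $T$ and $\Delta$, exploiting their independence from the filtration, and then take the Laplace transform in $t$. First I would write
\begin{align*}
F_1(t,u,v,w,x,y) = E\left[\mathbf{1}_{\{t<T\}}\,y^{A(t)}u^{A(T)}v^{A(T+\Delta)}e^{-wT-x\Delta}\right]
\end{align*}
and condition on $(T,\Delta)=(s,\delta)$. On the event $\{t<s\}$ I would split the increments along $0<t<s<s+\delta$ by writing $A(t)$, $A(s)-A(t)$, and $A(s+\delta)-A(s)$; because $A$ has independent increments and $(T,\Delta)$ is independent of $(\mathcal{F}_t)$, these three blocks are independent once we condition on $(s,\delta)$. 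Collecting the bases, $A(t)$ carries $y\cdot u\cdot v = uvy$, the increment $A(s)-A(t)$ carries $u\cdot v = uv$, and the increment $A(s+\delta)-A(s)$ carries $v$. By stationarity of increments their PGFs are $\varphi(uvy,t)$, $\varphi(uv,s-t)$, and $\varphi(v,\delta)$ respectively, with $\varphi$ as in \eqref{AsIncrement}.

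Next I would assemble the conditional expectation as
\begin{align*}
E\left[\,\cdot\mid T=s,\Delta=\delta\,\right] = \mathbf{1}_{\{t<s\}}\,e^{-ws-x\delta}\,\varphi(uvy,t)\,\varphi(uv,s-t)\,\varphi(v,\delta),
\end{align*}
so that, taking expectations over $(T,\Delta)$ and using their mutual independence,
\begin{align*}
F_1(t,u,v,w,x,y) = E\left[e^{-wT}\mathbf{1}_{\{t<T\}}\varphi(uvy,t)\varphi(uv,T-t)\right]\cdot E\left[e^{-x\Delta}\varphi(v,\Delta)\right].
\end{align*}
The second factor is already $E\left[e^{-x\Delta}\varphi(v,\Delta)\right]$ as in \eqref{F1star}. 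For the first factor I would apply the Laplace transform $\mathcal{L}_t$ in the variable $t$. Since the only $t$-dependence sits in $\mathbf{1}_{\{t<T\}}\varphi(uvy,t)\varphi(uv,T-t)$, Fubini (justified by the stated domain $\|u\|,\|v\|,\|y\|\le1$, $\mathrm{Re}(w)\ge0$, which makes everything bounded by an integrable dominating function) lets me swap the $t$-integral inside the expectation over $T$:
\begin{align*}
\mathcal{L}_t\left\{\varphi(uvy,t)\varphi(uv,T-t)\mathbf{1}_{\{t<T\}}\right\}(\theta) = \int_0^T e^{-\theta t}\varphi(uvy,t)\varphi(uv,T-t)\,dt = \psi(uvy,uv,T),
\end{align*}
which is exactly the convolution $\psi$ defined in \eqref{psi}. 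This yields $F_1^*(\theta,u,v,w,x,y)=E\left[e^{-wT}\psi(uvy,uv,T)\right]E\left[e^{-x\Delta}\varphi(v,\Delta)\right]$, as claimed.

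I expect the main technical obstacle to be the careful bookkeeping of which transform variable multiplies which increment block — in particular recognizing that $A(t)$ is counted in all three of $A(t)$, $A(T)$, and $A(T+\Delta)$ and therefore accrues the product $uvy$, while $A(T)-A(t)$ accrues $uv$ — and then the rigorous justification of interchanging the Laplace integral with the expectation over $T$. The independence of $(T,\Delta)$ from the filtration is what makes the conditioning legitimate; without it (as the paper notes for $\tau_{\nu-1},\tau_\nu$) the increment blocks would not decouple and this computation would fail. Everything else is the substitution $s=T$, $\delta=\Delta$ and reading off the definitions of $\varphi$ and $\psi$.
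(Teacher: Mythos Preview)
Your proposal is correct and follows essentially the same route as the paper: condition on $\sigma(T,\Delta)$, decompose $A(T)$ and $A(T+\Delta)$ into the independent blocks $A(t)$, $A(T)-A(t)$, $A(T+\Delta)-A(T)$ carrying bases $uvy$, $uv$, $v$ respectively, identify the resulting factors as $\varphi(uvy,t)\varphi(uv,T-t)\varphi(v,\Delta)$, and then apply Fubini to pass the Laplace integral in $t$ inside the expectation over $T$, yielding $\psi(uvy,uv,T)$. The only cosmetic difference is that the paper takes the Laplace transform before splitting the $(T,\Delta)$ integral, while you split first and transform second.
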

\begin{proof}
First, rewrite
\begin{align}
&A(T) = A(T) - A(t) + A(t),\,t\in[0,T)\label{SplitAT}
\\&A(T+\Delta) = A(T+\Delta) - A(T) + A(T) - A(t) + A(t),\,t\in[0,T)\label{SplitATplusDelta}
\end{align}
Then, using \eqref{SplitAT}-\eqref{SplitATplusDelta}, the assumed independence of $(\mathcal{F}_t)$ from the $\sigma$-algebra $\sigma(T,\Delta)$ (induced by $T$ and $\Delta$), and the ISI property of $A(t)$, we have
\begin{align*}
&E\left[u^{A(T)}v^{A(T+\Delta)}e^{-wT-x\Delta}y^{A(t)}\mathbf{1}_{\{t<T\}}\Big|\sigma(T,\Delta)\right],
\\&=\mathbf{1}_{\{t<T\}}e^{-wT-x\Delta}E\left[(uv)^{A(T) - A(t)}v^{A(T+\Delta)-A(T)}(uvy)^{A(t)}\Big|\sigma(T,\Delta)\right]
\\&=\mathbf{1}_{\{t<T\}}e^{-wT-x\Delta}E\left[(uv)^{A(T-t)}\Big|\sigma(T)\right]E\left[
v^{A(\Delta)}\Big|\sigma(\Delta)\right]E\left[(uvy)^{A(t)}\right].\label{FirstTheorem1}
\end{align*}
By notation \eqref{AsIncrement}-\eqref{psi},
\begin{align}
E\left[u^{A(T)}v^{A(T+\Delta)}e^{-wT-x\Delta}y^{A(t)}\mathbf{1}_{\{t<T\}}\Big|\sigma(T,\Delta)\right]=\mathbf{1}_{\{t<T\}}e^{-wT-x\Delta}\varphi(uv,T-t)\varphi(v,\Delta)\varphi(uvy,t).
\end{align}
Next, we seek
\begin{align*}
F_1^*(\theta,u,v,w,x,y) = \int_{t\geq 0} e^{-\theta t}F_1(t,u,v,w,x,y)\,dt.
\end{align*}
Rewriting $F_1$ in the integral form and using \eqref{FirstTheorem1}, we have
\begin{align*}
&F_1^*(\theta,u,v,w,x,y)
\\&=\int_{t\geq 0} e^{-\theta t}\int_{(s,\delta)\in\mathbb{R}^2_+} e^{-ws-x\delta}\varphi(uvy,t)\varphi(uv,s-t)\varphi(v,\delta)\mathbf{1}_{\{t<s\}}\,dP_{T\otimes\Delta}(s,\delta)\,dt.
\end{align*}
By the independence of $T$ and $\Delta$ and Fubini's Theorem,
\begin{align*}
F_1^*(\theta,u,v,w,x,y)&= \int_{\delta\geq 0} e^{-x\delta}\varphi(v,\delta)\,dP_\Delta(\delta)\,\int_{s\geq 0} e^{-ws}\int_{t=0}^s e^{-\theta t}\varphi(uvy,t)\varphi(uv,s-t)\,dt\,dP_T(s)
\\&=E\left[e^{-x\Delta}\varphi(v,\Delta)\right]\int_{s\geq 0} \psi(uvy,uv,s)\,dP_T(s)
\\&=E\left[e^{-x\Delta}\varphi(v,\Delta)\right]E\left[e^{-wT}\psi(uvy,uv,T)\right].
\end{align*}
\end{proof}

\begin{thm}\thlabel{SecondTheorem}
The functional $F_2(t,u,v,w,x,y)$ of the ISI process $A(t)$ on the trace $\sigma$-algebra

\vspace{-.3cm}
${\mathcal{F}\cap\{T\leq t<T+\Delta\}}$ where $T$ and $\Delta$ are independent of each other and of the process $A(t)$ satisfies
\begin{align}
F_2^*(\theta,u,v,w,x,y)=E\left[e^{-(w+x)T}\varphi(uvy,T)\right]E\left[e^{-x\Delta}\psi(vy,v,\Delta)\right],
\end{align}
where $\varphi$ and $\psi$ are defined in \eqref{AsIncrement}-\eqref{psi}.
\end{thm}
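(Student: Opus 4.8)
The plan is to run the same argument as in the proof of \thref{FirstTheorem}, changing only the increment decomposition so that it reflects that $t$ now lies \emph{between} the two observation epochs rather than before both. On the event $\{T\le t<T+\Delta\}$ the three intervals $[0,T]$, $[T,t]$, and $[t,T+\Delta]$ all have nonnegative length, so I would first write
\begin{align*}
A(T)&=A(T),\\
A(t)&=\bigl(A(t)-A(T)\bigr)+A(T),\\
A(T+\Delta)&=\bigl(A(T+\Delta)-A(t)\bigr)+\bigl(A(t)-A(T)\bigr)+A(T),
\end{align*}
and collect exponents to obtain $u^{A(T)}v^{A(T+\Delta)}y^{A(t)}=(uvy)^{A(T)}(vy)^{A(t)-A(T)}v^{A(T+\Delta)-A(t)}$ on that event.

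Next I would condition on $\sigma(T,\Delta)$. Since $(\mathcal{F}_t)$, and therefore each increment of $A$, is independent of $\sigma(T,\Delta)$, and $A$ has independent increments, the three powers decouple, and by \eqref{AsIncrement},
\begin{align*}
&E\left[u^{A(T)}v^{A(T+\Delta)}e^{-wT-x\Delta}y^{A(t)}\mathbf{1}_{\{T\le t<T+\Delta\}}\,\big|\,\sigma(T,\Delta)\right]\\
&\qquad=\mathbf{1}_{\{T\le t<T+\Delta\}}\,e^{-wT-x\Delta}\,\varphi(uvy,T)\,\varphi(vy,t-T)\,\varphi(v,T+\Delta-t).
\end{align*}
Taking expectations, rewriting the outer expectation as an integral against the joint law $P_{T\otimes\Delta}$, and then applying $\mathcal{L}_t$, the Fubini interchange moves the $t$-integration inside: for each $(s,\delta)$ this leaves the factor $e^{-ws-x\delta}\varphi(uvy,s)$ multiplied by the inner integral $\int_{s}^{s+\delta}e^{-\theta t}\varphi(vy,t-s)\varphi(v,s+\delta-t)\,dt$.

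The key manipulation is then the substitution $r=t-s$ in that inner integral, which turns it into $e^{-\theta s}\int_{0}^{\delta}e^{-\theta r}\varphi(vy,r)\varphi(v,\delta-r)\,dr=e^{-\theta s}\,\psi(vy,v,\delta)$ by definition \eqref{psi}. The integrand against $P_{T\otimes\Delta}$ thus factors as $\bigl(e^{-(w+\theta)s}\varphi(uvy,s)\bigr)\bigl(e^{-x\delta}\psi(vy,v,\delta)\bigr)$, and the independence of $T$ and $\Delta$ splits the double integral into a product, giving
\begin{align*}
F_2^*(\theta,u,v,w,x,y)=E\left[e^{-(w+\theta)T}\varphi(uvy,T)\right]E\left[e^{-x\Delta}\psi(vy,v,\Delta)\right].
\end{align*}

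I expect the one delicate point to be the bookkeeping around the indicator $\mathbf{1}_{\{T\le t<T+\Delta\}}$: I must check that after the Fubini interchange the inner $t$-integral runs over exactly $[s,s+\delta]$, so that the shift $r=t-s$ recovers the convolution $\psi$ cleanly, and that the increment decomposition above is invoked only where the indicator is nonzero (off that event both sides are $0$). The measure-theoretic prerequisites --- the integrability needed to justify the Fubini interchange, and the $L^1$ property in $t$ that makes $F_2^*$ well defined --- are routine, since every PGF $\varphi$ is bounded by $1$ on the stated domains and $w$, $x$, $\theta$ all have nonnegative real part.
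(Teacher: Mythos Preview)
Your proof is correct and follows essentially the same route as the paper: the same increment decomposition on $\{T\le t<T+\Delta\}$, conditioning on $\sigma(T,\Delta)$, Fubini, and the substitution $r=t-s$ to recover $\psi(vy,v,\delta)$. Note that your final exponent $e^{-(w+\theta)T}$ agrees with what the paper's own proof actually derives (and with the subsequent \thref{ThirdTheorem}); the $e^{-(w+x)T}$ appearing in the theorem statement is a typo.
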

\begin{proof}
First, notice
\begin{align}
&A(t) = A(t) - A(T) + A(T)\label{SecondProof1}
\\&A(T+\Delta) = A(T+\Delta) - A(t) + A(t) - A(T) + A(T)\label{SecondProof2}
\end{align}
for $t\in[T,T+\Delta)$. Then, using \eqref{SecondProof1}-\eqref{SecondProof2}, the ISI property of $A(t)$, and that the r.v.'s $T$ and $\Delta$ are $\sigma(T,\Delta)$-measurable, we have
\begin{align}
&E\left[u^{A(T)}v^{A(T+\Delta)}e^{-wT-x\Delta}y^{A(t)}\mathbf{1}_{\{T\leq t<T+\Delta\}}\Big|\sigma(T,\Delta)\right]\notag
\\&=e^{-wT-x\Delta}\mathbf{1}_{\{T\leq t<T+\Delta\}}E\left[(uvy)^{A(T)}(vy)^{A(t)-A(T)}u^{A(T+\Delta)-A(t)}\Big|\sigma(T,\Delta)\right]\notag
\\&=e^{-wT-x\Delta}\mathbf{1}_{\{T\leq t<T+\Delta\}}\varphi(uvy,T)\varphi(vy,t-T)\varphi(v,T+\Delta-t),\label{SecondProof3}
\end{align}
where $\varphi(u,s)=E\left[u^{A(s)}\right]$.

Next, we seek
\begin{align*}
F_2^*(\theta,u,v,w,x,y)=\int_{t\geq 0}e^{-\theta t}F_2(t,u,v,w,x,y)\,dt.
\end{align*}
Writing $F_2$ in the integral form and using \eqref{SecondProof3}, we have
\begin{align*}
&F_2^*(\theta,u,v,w,x,y)
\\&=\int_{t\geq 0}e^{-\theta t}\int_{(s,\delta)\in\mathbb{R}^2_+} e^{-ws-x\delta}\varphi(uvy,s)\varphi(vy,t-s)\varphi(v,s+\delta-t)\mathbf{1}_{\{s\leq t<s+\delta\}}\,dP_{T\otimes\Delta}(s,\delta)\,dt.
\end{align*}
By the independence of $T$ and $\Delta$ and Fubini's Theorem,
\begin{align*}
&F_2^*(\theta,u,v,w,x,y)
\\&= \int_{s\geq 0}e^{-(w+\theta)s}\varphi(uvy,s)\,dP_T(s)\int_{\delta\geq 0}e^{-x\delta}\int_{t=s}^{s+\delta}e^{-\theta(t-s)}\varphi(vy,t-s)\varphi(v,s+\delta-t)\,dt\,dP_\Delta(\delta).
\end{align*}
Using the ISI property and the change of variables $z=t-s$,
\begin{align*}
F_2^*(\theta,u,v,w,x,y)&= E\left[e^{-(w+\theta)T}\varphi(uvy,T)\right]\int_{\delta\geq 0}e^{-x\delta}\int_{z=0}^\delta e^{-\theta z}\varphi(vy,z)\varphi(v,\delta-z)\,dz\,dP_\Delta(\delta)
\\&=E\left[e^{-(w+\theta)T}\varphi(uvy,T)\right]\int_{\delta\geq 0}e^{-x\delta}\psi(vy,v,\delta)\,dP_\Delta(\delta)
\\&=  E\left[e^{-(w+\theta)T}\varphi(uvy,T)\right]E\left[e^{-x\Delta}\psi(vy,v,\Delta)\right].
\end{align*}
\end{proof}

\begin{ex}
We demonstrate the applicability of \thref{FirstTheorem,SecondTheorem} by considering the special case of $A(t)$ as a marked (compound) Poisson process, with position-independent marking, i.e.
\begin{align}
A(t) = \sum\limits_{k\geq 0} a_k\varepsilon_{t_k}[0,t],
\end{align}
where $\varepsilon_c$ is the Dirac point mass centered at $c\in\mathbb{R}_+$ and $a_k:\Omega\rightarrow\{0,1,2,...\}$ are independent and identically distributed (IID) r.v.'s with common PGF $g(z)=E\left[e^{a_1}\right]$, $\|z\|\leq 1$, and are independent of $\{t_k\}$. Furthermore, the underlying Poisson counting measure $\sum_{k=0}^\infty \varepsilon_{t_k}$ is of intensity $\lambda$, so
\begin{align}
E\left[z^{A(t)}\right] = e^{\lambda t[g(z)-1]}.
\label{ThirdProof1}
\end{align}
Note that the marked Poisson process $A(t)$ is a real-time process evolving over a sequence of epochs $\{t_k\}$. In Section~\ref{DelayedPassageSection}, we will consider a ``third-party" renewal process (mentioned in the introduction) over which $A(t)$ will be observed, forming the time series $\{A(\tau_n)\}$.
\end{ex}

Let $L_X(z) = E\left[e^{-zX}\right]$ denote the LST of a random variable $X$.
\begin{lem}\thlabel{ThirdTheorem}
Let $A(t)$ be a marked Poisson process of intensity $\lambda$ and let $T$ and $\Delta$ be r.v.'s independent of each other and of the filtration $(\mathcal{F}_t)$, then
\begin{align}
&F_1^*(\theta,u,v,w,x,y)=\frac{L_T(w+\lambda-\lambda g(uv)) - L_T(\theta+w+\lambda-\lambda g(uvy))}{\theta + \lambda g(uv) - \lambda g(uvy)}L_\Delta(x+\lambda-\lambda g(v))
\\&F_2^*(\theta,u,v,w,x,y)=L_T(\theta+w+\lambda-\lambda g(uvy))\frac{L_\Delta(x+\lambda-\lambda g(v)) - L_\Delta(\theta+x+\lambda-\lambda g(vy))}{\theta + \lambda g(v) - \lambda g(vy)},
\end{align}
where $g(z)=E\left[z^{a_1}\right]$ is the common PGF of the marks of $a_k$'s, the r.v. marks of the Poisson process.
\begin{proof}
Since $A(t)$ is a marked Poisson process, \eqref{ThirdProof1} implies
\begin{align}
&\varphi(a,X)=E\left[e^{-aA(X)}\Big|X\right] = e^{-\lambda(1-g(a))X}\label{ThirdProof2}
\end{align}
and
\begin{align}
\psi(b,c,X) &= \int_0^X e^{-\theta t}\varphi(b,t)\varphi(c,X-t)\,dt\notag
\\&= \int_0^X e^{-\theta t}e^{-\lambda(1-g(b))t}e^{-\lambda(1-g(c))(X-t)}\,dt\notag
\\&=e^{-\lambda(1-g(c))X}\int_0^X e^{-(\theta+\lambda g(c) - \lambda g(b))t}\,dt\notag
\\&=\frac{1}{\theta+\lambda g(c) -\lambda g(b)}\left(e^{-\lambda(1-g(c))X}-e^{-(\theta+\lambda-\lambda g(b))X}\right).\label{ThirdProof3}
\end{align}
The Poissonian assumption leads to expressions \eqref{ThirdProof2}-\eqref{ThirdProof3}, which simplify the result of \thref{FirstTheorem} to
\begin{align*}
F_1^*(\theta,u,v,w,x,y)&=\frac{E\left[e^{-wT}\left(e^{-\lambda(1-g(uv))T}-e^{-(\theta + \lambda - \lambda g(uvy))T}\right)\right]}{\theta + \lambda g(uv) - \lambda g(uvy)} E\left[e^{-x\Delta}e^{-\lambda(1-g(v))\Delta}\right]
\\&= \frac{L_T(w + \lambda - \lambda g(uv))-L_T(\theta+w+\lambda-\lambda g(uvy))}{\theta + \lambda g(uv) - \lambda g(uvy)}L_\Delta(x+\lambda-\lambda g(v)).
\end{align*}
Similarly, by \thref{SecondTheorem},
\begin{align*}
F_2^*(\theta,u,v,w,x,y)&=E\left[e^{-(w+x)T}e^{-\lambda(1-g(uvy))T}\right]\frac{E\left[e^{-x\Delta}\left(e^{-\lambda(1-g(v))\Delta}-e^{-(\theta+\lambda-\lambda g(vy))\Delta}\right)\right]}{\theta+\lambda g(v)-\lambda g(vy)}
\\&=L_T(\theta+w+\lambda-\lambda g(uvy))\frac{L_\Delta(x+\lambda-\lambda g(v)) - L_\Delta(\theta+x+\lambda-\lambda g(vy))}{\theta + \lambda g(v) - \lambda g(vy)}.
\end{align*}
\end{proof}
\end{lem}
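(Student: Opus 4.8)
The plan is to read \thref{ThirdTheorem} off \thref{FirstTheorem,SecondTheorem} by feeding in the explicit transform of a marked Poisson process; no new idea is required beyond evaluating one elementary integral. Concretely, the only content of the two general theorems that survives is the pair of functions $\varphi$ and $\psi$ from \eqref{AsIncrement}--\eqref{psi}, so it suffices to write these down in closed form when $A(t)$ is Poissonian and then substitute.

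First I would record, from \eqref{ThirdProof1}, that the increment PGF \eqref{AsIncrement} is a pure exponential in the time variable,
\[
\varphi(a,s)=e^{\lambda s(g(a)-1)}=e^{-\lambda(1-g(a))s},
\]
so that the convolution \eqref{psi} is a product of two exponentials integrated against $e^{-\theta t}$. That integral is elementary: pulling the $\delta$-dependent factor out,
\[
\psi(b,c,\delta)=e^{-\lambda(1-g(c))\delta}\int_0^\delta e^{-(\theta+\lambda g(c)-\lambda g(b))t}\,dt=\frac{e^{-\lambda(1-g(c))\delta}-e^{-(\theta+\lambda-\lambda g(b))\delta}}{\theta+\lambda g(c)-\lambda g(b)}.
\]
A small point of care: on the coincidence set $\theta+\lambda g(c)-\lambda g(b)=0$ the integral equals $\delta\,e^{-\lambda(1-g(c))\delta}$, and both the displayed quotient and the denominators appearing in the final formulas are to be read in this removable-singularity sense.

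Then I would substitute. For $F_1^*$, plug $\psi(uvy,uv,\cdot)$ (so $b=uvy$, $c=uv$) and $\varphi(v,\cdot)$ into \thref{FirstTheorem}: the factor $E[e^{-wT}\psi(uvy,uv,T)]$ splits, over the constant $\theta+\lambda g(uv)-\lambda g(uvy)$, into $E[e^{-(w+\lambda-\lambda g(uv))T}]-E[e^{-(\theta+w+\lambda-\lambda g(uvy))T}]$, and each of these is by definition $L_T$ at the stated argument; likewise $E[e^{-x\Delta}\varphi(v,\Delta)]=E[e^{-(x+\lambda-\lambda g(v))\Delta}]=L_\Delta(x+\lambda-\lambda g(v))$, which gives the first claim. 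For $F_2^*$ the mechanism is identical via \thref{SecondTheorem}: the $T$-expectation collapses to the single value $L_T(\theta+w+\lambda-\lambda g(uvy))$, while $E[e^{-x\Delta}\psi(vy,v,\Delta)]$ (now $b=vy$, $c=v$) becomes the difference $L_\Delta(x+\lambda-\lambda g(v))-L_\Delta(\theta+x+\lambda-\lambda g(vy))$ divided by $\theta+\lambda g(v)-\lambda g(vy)$. Since the derivation is purely computational, there is no genuine obstacle here; the only things demanding attention are the bookkeeping of which products of $u,v,y$ play the roles of $b$ and $c$ in $\psi$, and the removable singularity noted above.
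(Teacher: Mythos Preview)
Your proposal is correct and follows essentially the same route as the paper: compute $\varphi$ and $\psi$ explicitly from the compound-Poisson transform \eqref{ThirdProof1}, then substitute into \thref{FirstTheorem,SecondTheorem} and recognize the resulting expectations as $L_T$ and $L_\Delta$ evaluated at shifted arguments. Your remark on the removable singularity at $\theta+\lambda g(c)-\lambda g(b)=0$ is a welcome clarification that the paper omits.
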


\section{Delayed Passage Time Model}\label{DelayedPassageSection}

As noted in Section~\ref{GeneralResultsSection}, $A(t)$ is a marked Poisson process where the marks of the process, ${a_k:\Omega\rightarrow\{0,1,2,...\}}$, are IID r.v.'s with common PGF $g(z)=E\left[z^{a_1}\right]$, $\|z\|\leq 1$.

Next, we define a delayed renewal process $\mathcal{T}=\{\tau_n:n=0,1,...\}$ over which $A(t)$ will be observed. We assume that $\tau_n:\Omega\rightarrow\mathbb{R}_+$, $n=0,1,...$, are independent of the filtration $(\mathcal{F}_t)$. As mentioned in the introduction, real-time information about the process may not be available to the observer. Denote the inter-observation times as $\Delta_n=\tau_n-\tau_{n-1}$ for $n\geq 0$ and $\tau_{-1}=0$.

We assume the initial observation time $\tau_0=\Delta_0$ has a Laplace-Stieltjes transform (LST) ${L_0(\theta)=E\left[e^{-\theta\Delta_0}\right]}$ and all subsequent inter-observation times have a common LST of ${L(\theta)=E\left[e^{-\theta\Delta_1}\right]}$, $Re(\theta)\geq 0$.

Denote the observed value of the process up to time $\tau_n$ (the $n$th observation) by
\begin{align}
A_n=A(\tau_n),n\geq 0,
\end{align}
and the $n$th increment between the successive observations, as
\begin{align}
X_n=A(\tau_n)-A(\tau_{n-1})=A_n-A_{n-1},
\end{align}
where $A_{-1}=0$.

Since the very first observation epoch of time $\Delta_0$ need not be stochastically equivalent to $\Delta_n$ for $n\geq 1$, we will use a separate notation for the joint transform of the increment up to $\Delta_0$ and the time $\Delta_0$ itself,
\begin{align}
\gamma_0(z,\theta)=E\left[z^{X_0}e^{-\theta\Delta_0}\right],
\end{align}
and the notation for the $n$th increment and inter-observation time for all other $n$,
\begin{align}
\gamma(z,\theta)=E\left[z^{X_1}e^{-\theta\Delta_1}\right],\label{JointTransformOfIncrement}
\end{align}
with $\|z\|\leq 1$ and $Re(\theta)\geq 0$.

We will be interested in the distribution of the first observation upon which the process $A(t)$ exceeds a fixed threshold $M\in\mathbb{N}$, so we define the \textit{exit index} as a random integer corresponding to the first observation after a threshold crossing,
\begin{align}
\nu=\inf\{n=0,1,2,...:A_n>M\}
\end{align}
while $\tau_\nu$ will be called the \textit{first observed crossing time}. The goal of this section will be, under the condition that $A(t)$ is a marked Poisson process, to derive the joint functionals,
\begin{align}
&G_1^*(\theta,u,v,w,x,y)=\int_{t\geq 0}e^{-\theta t} E\left[u^{A_{\nu-1}}v^{A_\nu}e^{-w\tau_{\nu-1}-x\Delta_\nu}y^{A(t)}\mathbf{1}_{\{t<\tau_{\nu-1}\}}\right]\,dt\label{G1Star}
\\&G_2^*(\theta,u,v,w,x,y)=\int_{t\geq 0}e^{-\theta t}E\left[u^{A_{\nu-1}}v^{A_\nu}e^{-w\tau_{\nu-1}-x\Delta_\nu}y^{A(t)}\mathbf{1}_{\{\tau_{\nu-1}\leq t<\tau_\nu}\right]\,dt,\label{G2Star}
\end{align}
of the values of $A$ upon the observations immediately before and after the real-time crossing ($A_{\nu-1}$ and $A_\nu$), the real-time value of the process $A(t)$, and the times of the observations immediately before and after the crossing ($\tau_{\nu-1}$ and $\tau_\nu$). Notice that each functional deals with $t$ placed within a particular random time interval, either before $\tau_{\nu-1}$ or between $\tau_{\nu-1}$ and $\tau_\nu$. These are the Laplace transforms of functionals \eqref{G1} and \eqref{G2}, which are the primary goal of the study. Notice that in contrast to the assumptions of \thref{FirstTheorem,SecondTheorem}, $\tau_{\nu-1}$ and $\tau_\nu$ (that seem to resemble r.v.'s $T$ and $T+\Delta$) are no longer independent of the filtration $(\mathcal{F}_t)$. To make use of these theorems, we will use a special stochastic decomposition of the exit index $\nu$.

Thus, consider the family of exit indices
\begin{align}
\nu(p)=\inf\{n=0,1,2,...:A_n>p\}
\end{align}
corresponding to the first observed crossing of a threshold $p$. For a bounded function $f:\mathbb{N}\rightarrow\mathbb{R}_+$, we will further use the operator
\begin{align}
\text{D}_p(f(p))(s) = (1-s)\sum\limits_{p=0}^\infty s^p f(p), s\in B(0,1)\subseteq\mathbb{C},
\end{align}
where $B(0,1)$ is the open unit ball centered at zero in the complex plane. [Note that the subscript $p$ is being used in the event an underlying function has other parameters or variables.]

The operator $\mathcal{D}^k$ is applied to a complex-valued function $\varphi(s)$, analytic at 0,
\begin{align}
\mathcal{D}_s^k(\varphi(s))= \left\{
     \begin{array}{lr}
       \lim\limits_{s\rightarrow 0}\frac{1}{k!}\frac{\partial^k}{\partial s^k}\left[\frac{1}{1-s}\varphi(s)\right], & \text{if } k\geq 0\\
       0, & \text{if } k<0
     \end{array}
   \right.\label{DInverse}
\end{align}
Obviously $\{\mathcal{D}^k:k=0,1,...\}$ is the inverse of D$_p$. That is, $\mathcal{D}_s^k\left[\text{D}_p(f(p))(s)\right] = f(k)$, $k=0,1,...$

In the following theorem, we will apply the transform D$_p$ to a sequence of functionals including a factor of the form $\mathbf{1}_{\{t<\tau_{\nu(p)-1}\}}$ inside the expectation, i.e. a restriction of the functional to the set where time is before the observation just prior to the first observed crossing of a threshold $p$. In Section~\ref{SpecialCaseResultsSection}, we will demonstrate how to apply the inverse operator $\mathcal{D}_s^{M-1}$ to restore the functional where $p=M$.

Denote
\begin{align}
&G_{1,j}^*(\theta,u,v,w,x,y)=\int_{t\geq 0}e^{-\theta t}E\left[u^{A_{j-1}}v^{A_j}e^{-w\tau_{j-1}-x\Delta_j}y^{A(t)}\mathbf{1}_{\{t<\tau_{j-1}\}}\right]\,dt\label{G1jStar}
\\&G_{2,j}^*(\theta,u,v,w,x,y)=\int_{t\geq 0}e^{-\theta t}E\left[u^{A_{j-1}}v^{A_j}e^{-w\tau_{j-1}-x\Delta_j}y^{A(t)}\mathbf{1}_{\{\tau_{j-1}\leq t<\tau_j\}}\right]\,dt.\label{G2jStar}
\end{align}

Note that these functionals are similar to \eqref{G1Star}-\eqref{G2Star}, except they specify a fixed index $j$ indicating the r.v.'s in the functionals are associated specifically with the $j$th observation epoch. The utility of these functionals will be seen in the proofs of \thref{FourthTheorem,FifthTheorem}, as \eqref{G1Star} and \eqref{G2Star} will be expanded into series over this index $j$ of functionals of types \eqref{G1jStar} and \eqref{G2jStar} with the additional factor $\mathbf{1}_{\{\nu(p)=j\}}$ fixing $\nu(p)=j$ within the expectations.

\begin{thm}\thlabel{FourthTheorem}
The joint functional $G_1^*(\theta,u,v,w,x,y)$ of the process $A(t)$ on the interval $[0,\tau_{\nu-1})$ satisfies
\begin{align}
G_1^*(\theta,u,v,w,x,y) = \mathcal{D}_s^{M-1}\left\{B_1(B_2-B_3)\right\}
\end{align}
where
\begin{align}
B_1 = \frac{\gamma(v,x)-\gamma(vs,x)}{\theta + \lambda g(uvs) - \lambda g(uvys)},\hspace{1cm} B_2=\frac{\gamma_0(uvs,w)}{1-\gamma(uvs,w)},\hspace{1cm} B_3=\frac{\gamma_0(uvys,\theta+w)}{1-\gamma(uvys,\theta+w)}.\label{FourthProof2}
\end{align}
\end{thm}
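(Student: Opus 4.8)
The plan is to expand $G_1^*$ as a stochastic series over the value of $\nu$, apply the D$_p$-operator trick to decouple the exit index from the filtration, and then identify each term as a product of quantities computable via \thref{ThirdTheorem}. Concretely, I would first write
\begin{align*}
G_1^*(\theta,u,v,w,x,y) = \sum_{j=0}^\infty \int_{t\geq 0}e^{-\theta t}E\left[u^{A_{j-1}}v^{A_j}e^{-w\tau_{j-1}-x\Delta_j}y^{A(t)}\mathbf{1}_{\{t<\tau_{j-1}\}}\mathbf{1}_{\{\nu=j\}}\right]dt,
\end{align*}
and then replace $\nu$ by $\nu(p)$ and apply D$_p$. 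Since $\{\nu(p)=j\}=\{A_{j-1}\le p\}\cap\{A_j>p\}$ and D$_p$ acting on $\mathbf{1}_{\{A_{j-1}\le p, A_j>p\}}$ collapses (after summing the geometric series in $s^p$) to $s^{A_{j-1}}-s^{A_j}$, the term for index $j$ becomes
\begin{align*}
\int_{t\ge 0}e^{-\theta t}E\left[u^{A_{j-1}}v^{A_j}(s^{A_{j-1}}-s^{A_j})e^{-w\tau_{j-1}-x\Delta_j}y^{A(t)}\mathbf{1}_{\{t<\tau_{j-1}\}}\right]dt,
\end{align*}
which splits into the difference of two functionals of the $G_{1,j}^*$ type, one with $u$ replaced by $us$ and one with $v$ replaced by $vs$ — crucially, these no longer involve the stopping index, so the independence hypotheses of \thref{ThirdTheorem} now hold with $T=\tau_{j-1}$ and $\Delta=\Delta_j$.

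Next I would compute each $G_{1,j}^*$ explicitly. Writing $\tau_{j-1}=\Delta_0+\Delta_1+\cdots+\Delta_{j-1}$ as a sum of independent pieces and using that $A(t)$ has independent increments, the joint LST/PGF of $(A_{j-1},\tau_{j-1})$ factors as $\gamma_0(\cdot,\cdot)\gamma(\cdot,\cdot)^{j-1}$ in the appropriate variables; the extra $v^{A_j}e^{-x\Delta_j}$ contributes one more $\gamma(v,x)$-type factor over the last interval $[\tau_{j-1},\tau_j)$ (with the $A(t)$-restriction handled exactly as in \thref{FirstTheorem} via the $\psi$-convolution). After substituting the marked-Poisson forms \eqref{ThirdProof2}-\eqref{ThirdProof3}, the $\psi$-factor produces the denominator $\theta+\lambda g(uvs)-\lambda g(uvys)$ and the numerator difference $\gamma(v,x)-\gamma(vs,x)$ — that is, the factor $B_1$ — times a geometric factor $\gamma_0(\cdot)\gamma(\cdot)^{j-1}$ in the variable $uvs$ with parameter $w$ for the first summand and in $uvys$ with parameter $\theta+w$ for the second. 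Summing over $j=0,1,2,\ldots$ (a convergent geometric series since $\|\gamma(\cdot,\cdot)\|<1$ strictly inside the domain) turns these into $\gamma_0(uvs,w)/(1-\gamma(uvs,w))=B_2$ and $\gamma_0(uvys,\theta+w)/(1-\gamma(uvys,\theta+w))=B_3$. Finally, applying the inverse operator $\mathcal{D}_s^{M-1}$ to recover $p=M$ yields $G_1^* = \mathcal{D}_s^{M-1}\{B_1(B_2-B_3)\}$.

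The main obstacle I anticipate is the bookkeeping in the series expansion: making sure the D$_p$-operator is legitimately interchanged with the expectation and the $dt$-integral (dominated convergence, using boundedness of the integrand and $|s|<1$), verifying that D$_p(\mathbf{1}_{\{A_{j-1}\le p<A_j\}})=s^{A_{j-1}}-s^{A_j}$ exactly (the telescoping of $(1-s)\sum_p s^p\mathbf{1}_{\{A_{j-1}\le p<A_j\}}$), and correctly tracking which variable ($uvs$ vs.\ $uvys$) and which Laplace parameter ($w$ vs.\ $\theta+w$) attaches to each of the two summands coming from the $\psi$-convolution in \eqref{ThirdProof3}. The $j=0$ boundary term (where $\tau_{-1}=0$, $A_{-1}=0$, so the $\mathbf{1}_{\{t<\tau_{-1}\}}$ factor vanishes and contributes nothing, consistent with the geometric series starting cleanly) and the role of the delayed first epoch $\gamma_0$ versus the generic $\gamma$ also need care. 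Once the indexing is pinned down, the rest is the routine Poissonian substitution already carried out in \thref{ThirdTheorem}.
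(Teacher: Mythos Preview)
Your proposal is correct and follows essentially the same approach as the paper's proof: expand over $j$, apply $\text{D}_p$ to replace $\mathbf{1}_{\{\nu(p)=j\}}$ by $s^{A_{j-1}}-s^{A_j}$, recognize the resulting terms as $G_{1,j}^*(\theta,us,v,\ldots)-G_{1,j}^*(\theta,u,vs,\ldots)$, evaluate each via \thref{ThirdTheorem} with $T=\tau_{j-1}$ and $\Delta=\Delta_j$, sum the geometric series, and invert with $\mathcal{D}_s^{M-1}$. One small bookkeeping slip to fix when you write it up: the numerator $\gamma(v,x)-\gamma(vs,x)$ in $B_1$ arises from the $L_\Delta$-factor (i.e.\ the $\varphi(v,\Delta)$ piece) in the two subtracted $G_{1,j}^*$ terms, not from the $\psi$-convolution --- it is the $\psi$-factor that produces the denominator \emph{and} the two $L_T$-terms that, after summation over $j$, become $B_2$ and $B_3$.
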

\begin{proof}
Denote
\begin{align*}
\Phi_1(\theta,s)=\text{D}_p\left(\int_{t\geq 0} e^{-\theta t} E\left[u^{A_{\nu(p)-1}}v^{A_{\nu(p)}}e^{-w\tau_{\nu(p)-1}-x\Delta_{\nu(p)}}y^{A(t)}\mathbf{1}_{\{t<\tau_{\nu(p)-1}\}}\right]\,dt\right)(s)
\end{align*}
and notice
\begin{align*}
&u^{A_{\nu(p)-1}}v^{A_\nu(p)}e^{-w\tau_{\nu(p)-1}-x\Delta_\nu}y^{A(t)}\mathbf{1}_{\{t<\tau_{\nu(p)-1}\}}
\\&=\sum\limits_{j\geq 0} u^{A_{j-1}}v^{A_j}e^{-w\tau_{j-1}-x\Delta_j}y^{A(t)}\mathbf{1}_{\{\nu(p)=j\}}\mathbf{1}_{\{t<\tau_{j-1}\}}.
\end{align*}
Then
\begin{align}
\Phi_1(\theta,s)=\text{D}_p\left(\int_{t\geq 0} e^{-\theta t} E\left[\sum\limits_{j\geq 0} u^{A_{j-1}}v^{A_j}e^{-w\tau_{j-1}-x\Delta_j}y^{A(t)}\mathbf{1}_{\{\nu(p)=j\}}\mathbf{1}_{\{t<\tau_{j-1}\}}\right]\,dt\right)(s).
\end{align}
Notice
\begin{align}
\text{D}_p\left(\mathbf{1}_{\nu(p)=j}\right)(s)=(1-s)\sum\limits_{p=A_{j-1}}^{A_j-1} s^p = s^{A_{j-1}}-s^{A_j}.
\end{align}
Then, by Fubini's Theorem and since $\mathbf{1}_{\{t<\tau_{j-1}\}}=0$ for $j=0$,
\begin{align*}
\Phi_1(\theta,s)&=\sum\limits_{j>0}\int_{t\geq 0} e^{-\theta t} E\left[u^{A_{j-1}}v^{A_j}e^{-w\tau_{j-1}-x\Delta_j}y^{A(t)}\left(s^{A_{j-1}}-s^{A_j}\right)\mathbf{1}_{\{t<\tau_{j-1}\}}\right]\,dt
\\&=\sum\limits_{j>0} G_{1,j}^*(\theta,us,v,w,x,y)-G_{1,j}^*(\theta,u,vs,w,x,y).
\end{align*}
Because the observation process $\mathcal{T}=\{\tau_n\}$ is independent of $A(t)$, \thref{ThirdTheorem} implies that, for $j=1,2,...$,
\begin{align}
&G_{1,j}^*(\theta,us,v,w,x,y)\notag
\\&=\left[\gamma_0(uvs,w)\gamma^{j-1}(uvs,w)-\gamma_0(uvys,\theta+w)\gamma^{j-1}(uvys,\theta+w)\right]\frac{\gamma(vs,x)}{\theta+\lambda g(uvs)-\lambda g(uvys)}\label{FourthProof1}
\end{align}
Notice that the norm of the joint transform \eqref{JointTransformOfIncrement} of the increment of the process upon the $n$th ($n\geq 1$) inter-observation time is strictly less than 1 in each underlying instance. This is due to the argument in Appendix~\ref{AppendixB} (i.e. ${\|\gamma(uvs,w)\|<1}$ and ${\|\gamma(uvys,\theta+w)\|<1}$). Therefore, summing up \eqref{FourthProof1} for ${j=1,2,...}$ yields a geometric series,
\begin{align*}
&\Phi_1(\theta,s)
\\&=\frac{\gamma(v,x)-\gamma(vs,x)}{\theta+\lambda g(uvs)-\lambda g(uvys)}\left(\sum\limits_{j>0}\left[\gamma_0(uvs)\gamma^{j-1}(uvs,w)-\gamma_0(uvys,\theta+w)\gamma^{j-1}(uvys,\theta+w)\right]\right)
\\&=\frac{\gamma(v,x)-\gamma(vs,x)}{\theta+\lambda g(uvs)-\lambda g(uvys)}\left(\frac{\gamma_0(uvs,w)}{1-\gamma(uvs,w)}-\frac{\gamma_0(uvys,\theta+w)}{1-\gamma(uvys,\theta+w)}\right) = B_1(B_2-B_3),
\end{align*}
where $B_1$, $B_2$, and $B_3$ are defined in \eqref{FourthProof2}.
\end{proof}

\begin{thm}\thlabel{FifthTheorem}
The joint functional $G_2^*(\theta,u,v,w,x,y)$ of the process $A(t)$ on the interval $[\tau_{\nu-1},\tau_\nu)$ satisfies
\begin{align}
G_2^*(\theta,u,v,w,x,y)=\mathcal{D}_s^{M-1}\left(\Gamma_0+\Gamma B_3\right)
\end{align}
where
\begin{align}
&\Gamma_0 = \frac{\gamma_0(v,x)-\gamma_0(vy,\theta+x)}{\theta+\lambda g(v)-\lambda g(vy)}-\frac{\gamma_0(vs,x)-\gamma_0(vys,\theta+x)}{\theta+\lambda g(vs)-\lambda g(vys)},
\\&\Gamma = \frac{\gamma(v,x)-\gamma(vy,\theta+x)}{\theta+\lambda g(v)-\lambda g(vy)}-\frac{\gamma(vs,x)-\gamma(vys,\theta+x)}{\theta + \lambda g(vs) - \lambda g(vys)},
\end{align}
and $B_3$ is defined in \eqref{FourthProof2}.
\end{thm}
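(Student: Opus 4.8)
The plan is to follow the proof of \thref{FourthTheorem} almost verbatim, the only genuinely new feature being an extra boundary term. First I would set
\begin{align*}
\Phi_2(\theta,s)=\text{D}_p\left(\int_{t\geq 0}e^{-\theta t}E\left[u^{A_{\nu(p)-1}}v^{A_{\nu(p)}}e^{-w\tau_{\nu(p)-1}-x\Delta_{\nu(p)}}y^{A(t)}\mathbf{1}_{\{\tau_{\nu(p)-1}\leq t<\tau_{\nu(p)}\}}\right]\,dt\right)(s),
\end{align*}
insert $1=\sum_{j\geq 0}\mathbf{1}_{\{\nu(p)=j\}}$ under the expectation, and apply $\text{D}_p$ termwise. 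Using $\text{D}_p(\mathbf{1}_{\{\nu(p)=j\}})(s)=s^{A_{j-1}}-s^{A_j}$ (established in the proof of \thref{FourthTheorem}) together with Fubini's Theorem yields
\begin{align*}
\Phi_2(\theta,s)=\sum_{j\geq 0}\left[G_{2,j}^*(\theta,us,v,w,x,y)-G_{2,j}^*(\theta,u,vs,w,x,y)\right],
\end{align*}
since the $s^{A_{j-1}}$ factor triggers $u\mapsto us$ in the first functional and the $s^{A_j}$ factor triggers $v\mapsto vs$ in the second. In contrast to \thref{FourthTheorem}, the $j=0$ term now does not vanish: because $\tau_{-1}=0$, the event $\{\tau_{-1}\leq t<\tau_0\}=\{t<\tau_0\}$ has positive probability, and this term is precisely what will generate $\Gamma_0$.

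Next I would evaluate each $G_{2,j}^*$ using \thref{ThirdTheorem}. This applies because $G_{2,j}^*$ is exactly the Laplace-transformed functional $F_2^*$ of \thref{SecondTheorem} with $T=\tau_{j-1}$ and $\Delta=\Delta_j$, which are independent of each other (disjoint increments of the renewal process $\mathcal{T}$) and of $(\mathcal{F}_t)$, while $A(t)$ is marked Poisson. For $j\geq 1$, with $L_{\tau_{j-1}}=L_0L^{j-1}$, $L_{\Delta_j}=L$, and $\gamma_0(z,\theta)=L_0(\theta+\lambda-\lambda g(z))$, $\gamma(z,\theta)=L(\theta+\lambda-\lambda g(z))$, this gives
\begin{align*}
G_{2,j}^*(\theta,u,v,w,x,y)=\gamma_0(uvy,\theta+w)\,\gamma^{j-1}(uvy,\theta+w)\,\frac{\gamma(v,x)-\gamma(vy,\theta+x)}{\theta+\lambda g(v)-\lambda g(vy)};
\end{align*}
applying $u\mapsto us$ to one copy and $v\mapsto vs$ to the other and subtracting collapses the $j\geq 1$ contribution to $\gamma_0(uvys,\theta+w)\,\gamma^{j-1}(uvys,\theta+w)\,\Gamma$. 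For $j=0$ the same computation with the degenerate $T=\tau_{-1}=0$ (so $L_T\equiv 1$) and $\Delta=\Delta_0$ (so $L_\Delta=L_0$) gives $G_{2,0}^*(\theta,u,v,w,x,y)=\big(\gamma_0(v,x)-\gamma_0(vy,\theta+x)\big)\big/\big(\theta+\lambda g(v)-\lambda g(vy)\big)$, which is free of $u$ and $w$; hence its $us$-copy equals itself and the $j=0$ difference is exactly $\Gamma_0$.

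Finally I would sum. By the bound of Appendix~\ref{AppendixB}, $\|\gamma(uvys,\theta+w)\|<1$, so the $j\geq 1$ terms form a geometric series with sum $\Gamma\cdot\gamma_0(uvys,\theta+w)\big/\big(1-\gamma(uvys,\theta+w)\big)=\Gamma B_3$, whence $\Phi_2(\theta,s)=\Gamma_0+\Gamma B_3$. Then, exactly as at the end of the proof of \thref{FourthTheorem}, since $\{\mathcal{D}_s^k\}$ inverts $\text{D}_p$ and restores the functional at threshold level $M$, applying $\mathcal{D}_s^{M-1}$ yields $G_2^*(\theta,u,v,w,x,y)=\mathcal{D}_s^{M-1}(\Gamma_0+\Gamma B_3)$.

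I expect the main obstacle to be careful bookkeeping rather than any conceptual difficulty: one must isolate the degenerate $j=0$ term correctly — this is the sole reason $\Gamma_0$ appears and $G_2^*$ is not merely a reindexed copy of $G_1^*$ — and track the substitutions $u\mapsto us$, $v\mapsto vs$ consistently through every argument of $\gamma_0$, $\gamma$, and $g$. The interchange of summation, integration, and expectation and the convergence of the geometric series are routine given the Appendix~\ref{AppendixB} estimate.
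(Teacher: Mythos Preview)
Your proposal is correct and follows essentially the same route as the paper's own proof: define $\Phi_2$ via the $\text{D}_p$-transform, decompose over $\{\nu(p)=j\}$, evaluate each $G_{2,j}^*$ through \thref{ThirdTheorem}, isolate the $j=0$ term as $\Gamma_0$, and sum the $j\geq 1$ geometric series to $\Gamma B_3$. The only cosmetic difference is that the paper writes the $j=0$ contribution as $G_{2,0}^*(t,1,v,0,x,y)-G_{2,0}^*(t,1,vs,0,x,y)$, setting $u=1$, $w=0$ explicitly, whereas you (equivalently) note that $G_{2,0}^*$ is independent of $u$ and $w$ because $A_{-1}=\tau_{-1}=0$.
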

\begin{proof}
Denote
\begin{align*}
\Phi_2(\theta,s)&=\mathcal{D}_p\left(\int_{t\geq 0} e^{-\theta t}E\left[u^{A_{\nu(p)-1}}v^{A_{\nu(p)}}e^{-w\tau_{\nu(p)-1}-x\Delta_{\nu(p)}}y^{A(t)}\mathbf{1}_{\{\tau_{\nu(p)-1}\leq t<\tau_{\nu(p)}\}}\right]\,dt\right)(s)
\\&=\mathcal{D}_p\left(\int_{t\geq 0} e^{-\theta t}E\left[\sum\limits_{j\geq 0} u^{A_{j-1}}v^{A_j}e^{-w\tau_{j-1}-x\Delta_j} y^{A(t)}\mathbf{1}_{\{\nu(p)=j\}}\mathbf{1}_{\{\tau_{j-1}\leq t<\tau_j\}}\right]\,dt\right)(s).
\end{align*}
Notice
\begin{align*}
&u^{A_{\nu(p)-1}}v^{A_{\nu(p)}}e^{-w\tau_{\nu(p)-1}-x\Delta_{\nu(p)}}y^{A(t)}\mathbf{1}_{\{\tau_{\nu(p)-1}\leq t<\tau_{\nu(p)}\}}
\\&=\sum\limits_{j\geq 0} u^{A_{j-1}}v^{A_j}e^{-w\tau_{j-1}-x\Delta_j} y^{A(t)}\mathbf{1}_{\{\nu(p)=j\}}\mathbf{1}_{\{\tau_{j-1}\leq t<\tau_j\}}.
\end{align*}
Then, by Fubini's Theorem,
\begin{align}
\Phi_2(\theta,s)&=\int_{t\geq 0}e^{-\theta t} E\left[v^{A_0}e^{-x\Delta_0}\left(1-s^{A_0}\right)y^{A(t)}\mathbf{1}_{\{0\leq t<\tau_0\}}\right]\,dt\notag
\\&\hspace{1cm}\times\sum\limits_{j>0}e^{-\theta t}\int_{t\geq 0}E\left[u^{A_{j-1}}v^{A_j}e^{-w\tau_{j-1}-x\Delta_j}y^{A(t)}\left(s^{A_{j-1}}-s^{A_j}\right)\mathbf{1}_{\{\tau_{j-1}\leq t<\tau_j\}}\right]\,dt\notag
\\&=G_{2,0}^*(t,1,v,0,x,y)-G_{2,0}^*(t,1,vs,0,x,y)\notag
\\&\hspace{1cm}+\sum\limits_{j>0} G_{2,j}^*(t,us,v,w,x,y)-G_{2,j}^*(t,u,vs,w,x,y).\label{FifthProof1}
\end{align}
Since the observation process is independent of $A(t)$, \thref{ThirdTheorem} implies
\begin{align}
&G_{2,0}^*(t,1,v,0,x,y)-G_{2,0}^*(t,1,vs,0,x,y)\notag
\\&=\frac{\gamma_0(v,x)-\gamma_0(vy,\theta+x)}{\theta + \lambda g(v) - \lambda g(vy)}-\frac{\gamma_0(vs,x)-\gamma_0(vys,\theta+x)}{\theta + \lambda g(vs) - \lambda g(vys)}=\Gamma_0\label{FifthProof2}
\end{align}
and, for $j=1,2,...$,
\begin{align}
&G_{2,j}^*(t,us,v,w,x,y)=\gamma_0(uvys,\theta+w)\gamma^{j-1}(uvys,\theta+w)\frac{\gamma(v,x)-\gamma(vy,\theta+x)}{\theta + \lambda g(v) - \lambda g(vy)}
\\&G_{2,j}^*(t,u,vs,w,x,y)=\gamma_0(uvys,\theta+w)\gamma^{j-1}(uvys,\theta+w)\frac{\gamma(vs,x)-\gamma(vys,\theta+x)}{\theta + \lambda g(vs) - \lambda g(vys)}.\label{FifthProof3}
\end{align}
Summing \eqref{FifthProof1} via \eqref{FifthProof2}-\eqref{FifthProof3} over $j=1,2,...$ yields a geometric series,
\begin{align*}
\Phi_2(\theta,s)=\Gamma_0+\Gamma\gamma_0(uvys,\theta+w)\sum\limits_{j>0}\gamma^{j-1}(uvys,\theta + w)=\Gamma_0+\Gamma B_3.
\end{align*}
(See a pertinent argument in the paragraph below \eqref{FourthProof1}.)
\end{proof}

\begin{cor}\thlabel{Corollary}
The joint functional $G^*(\theta,u,v,w,x,y)$ of the process $A(t)$ on the interval $[0,\tau_\nu)$ satisfies
\begin{align}
G^*(\theta,u,v,w,x,y)&=\int_{t\geq 0}e^{-\theta t}E\left[u^{A_{\nu-1}}v^{A_\nu}e^{-w\tau_{\nu-1}-x\Delta_\nu}y^{A(t)}\mathbf{1}_{\{t<\tau_{\nu}\}}\right]\,dt\notag
\\&= G_1^*(\theta,u,v,w,x,y) + G_2^*(\theta,u,v,w,x,y)\notag
\\&= \mathcal{D}_s^{M-1}\left\{B_1(B_2 - B_3)+\Gamma_0+\Gamma B_3\right\}.
\end{align}
\end{cor}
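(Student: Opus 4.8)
The plan is to observe that Corollary~\ref{Corollary} follows immediately by additivity of the indicator functions and linearity of expectation and the Laplace transform, combined with \thref{FourthTheorem,FifthTheorem}. First I would note the pathwise identity
\begin{align*}
\mathbf{1}_{\{t<\tau_\nu\}} = \mathbf{1}_{\{t<\tau_{\nu-1}\}} + \mathbf{1}_{\{\tau_{\nu-1}\leq t<\tau_\nu\}},
\end{align*}
which holds because $\tau_{\nu-1}\leq\tau_\nu$ almost surely (the renewal epochs are nondecreasing, indeed strictly increasing, and $\nu\geq 1$ as $A_{-1}=0\leq M$). Multiplying both sides by the common integrand $u^{A_{\nu-1}}v^{A_\nu}e^{-w\tau_{\nu-1}-x\Delta_\nu}y^{A(t)}$, taking expectations, then multiplying by $e^{-\theta t}$ and integrating over $t\geq 0$, splits $G^*$ exactly as $G_1^*+G_2^*$; this is just Tonelli/Fubini applied to the nonnegative (or absolutely convergent, given the stated domains $\|u\|,\|v\|,\|y\|\leq 1$, $Re(w),Re(x),Re(\theta)\geq 0$) integrands, and the interchange is justified by the finiteness already implicit in \thref{FourthTheorem,FifthTheorem}.

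Next I would substitute the closed forms: $G_1^* = \mathcal{D}_s^{M-1}\{B_1(B_2-B_3)\}$ from \thref{FourthTheorem} and $G_2^* = \mathcal{D}_s^{M-1}(\Gamma_0+\Gamma B_3)$ from \thref{FifthTheorem}. Since $\mathcal{D}_s^{M-1}$ is a linear operator (it is, up to the fixed factor $1/(1-s)$, the $(M-1)$st Taylor coefficient functional, hence linear on functions analytic at $0$), one has
\begin{align*}
G_1^* + G_2^* = \mathcal{D}_s^{M-1}\{B_1(B_2-B_3)\} + \mathcal{D}_s^{M-1}(\Gamma_0+\Gamma B_3) = \mathcal{D}_s^{M-1}\{B_1(B_2-B_3)+\Gamma_0+\Gamma B_3\},
\end{align*}
which is exactly the asserted expression.

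The only point requiring a modicum of care — and the closest thing to an obstacle — is the legitimacy of pulling $\mathcal{D}_s^{M-1}$ through the sum, i.e. that $B_1(B_2-B_3)+\Gamma_0+\Gamma B_3$ is analytic at $s=0$ so that the linearity of $\mathcal{D}_s^{M-1}$ genuinely applies. This is inherited from the proofs of \thref{FourthTheorem,FifthTheorem}: each of $B_1,B_2,B_3,\Gamma_0,\Gamma$ arises there as a convergent geometric series or a ratio whose denominator does not vanish at $s=0$ (the geometric ratios $\gamma(uvs,w),\gamma(uvys,\theta+w)$ have norm $<1$ by the Appendix~\ref{AppendixB} argument, and the "difference-quotient" denominators are handled as removable singularities exactly as in \eqref{ThirdProof3}), so the sum is analytic in a neighborhood of $0$ and the operator is well defined on it. With that, no further computation is needed: the Corollary is a one-line consequence of the two theorems and the linearity of $\mathcal{D}_s^{M-1}$.
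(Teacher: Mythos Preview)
Your proposal is correct and matches the paper's approach: the Corollary is stated there without proof precisely because it follows immediately from the indicator decomposition $\mathbf{1}_{\{t<\tau_\nu\}}=\mathbf{1}_{\{t<\tau_{\nu-1}\}}+\mathbf{1}_{\{\tau_{\nu-1}\leq t<\tau_\nu\}}$, linearity, and \thref{FourthTheorem,FifthTheorem}. One small slip: your claim that $\nu\geq 1$ is not guaranteed (the infimum runs over $n\geq 0$, so $\nu=0$ is possible), but this is harmless since $\tau_{-1}=0$ by convention and the decomposition still holds with the first indicator vanishing identically for $t\geq 0$.
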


\section{Results for a Special Case}\label{SpecialCaseResultsSection}

Note that the results of the prior sections consider the marks and inter-observation times to be simply IID, with no further assumptions on the distributions. The purpose of this section is to demonstrate that the results of the prior sections can readily lead to expressions suitable for numerical and analytical manipulation. As such, we will now derive some results for the one-dimensional marked Poisson process discussed in the previous section under the following assumptions:
\begin{enumerate}
\item The marks $a_k$ are geometrically distributed with parameter $a$, so their common PGF is ${g(z)=\frac{az}{1-bz}}$ for $k=1,2,...$ (where $b = 1-a$)
\item The inter-observation times $\Delta_k$ are exponentially distributed with parameter $\mu$, so their common LST is ${L(z)=\frac{\mu}{\mu+z}}$ for $k=1,2,...$
\item We set the initial observation $\tau_0=0$ and the initial value of the process $A(\tau_0)=0$. Thus, we have ${\gamma_0(\cdot,\cdot)=1}$.
\end{enumerate}

While it is not significantly more complex to find $G_1^*(\theta,u,v,w,x,y)$ and $G_2^*(\theta,u,v,w,x,y)$ in full generality, in the interest of brevity, we will restrict our attention to the following demonstration of the utility of the results above by deriving the joint probability distribution $P\{A_\nu=r,\tau_{\nu-1}>t\}$, i.e. the distribution of the position of the process upon the first observed crossing, and the time of the observation just prior to the real-time crossing.

The first step is to use assumptions 1-3 to simplify the result of \thref{FourthTheorem}, which allows us to apply the inverse operator $\mathcal{D}_s^{M-1}$ explicitly to find $G_1^*(\theta,1,v,0,0,1)$.

\begin{prop}\thlabel{SixthTheorem}
Under assumptions 1-3, the Laplace transform of the probability generating function of the process upon the first observed passage time, $A_\nu$, restricted to ${\{t<\tau_{\nu-1}\}}$, is
\begin{align}
G_1^*(\theta,1,v,0,0,1)&=\int_{t\geq 0} e^{-\theta t}E\left[v^{A_\nu}\mathbf{1}_{\{t<\tau_{\nu-1}\}}\right]\,dt\notag
\\&=\frac{1}{\theta}\left\{\gamma(v,0)\frac{\mu+\lambda}{\lambda}\left[v^{M-1}+(1-F(\mu,v))\sum\limits_{j=0}^{M-2} v^j\right]\right.\notag
\\&\hspace{1cm}\left. -\gamma(v,0)\frac{\mu+\theta+\lambda}{\theta+\lambda}\left[F(\theta,v)^{M-1}+(1-F(\mu+\theta,v))\sum\limits_{j=0}^{M-2} F(\theta,v)^j\right]\right\}\notag
\\&\hspace{.4cm}-\frac{\mu}{\lambda}\left[\sum\limits_{j=0}^{M-1} v^j \sum\limits_{i=0}^{M-1-j} F(\mu,v)^i - (F(\mu,v)+bv)\sum\limits_{j=0}^{M-2}v^j\sum\limits_{i=0}^{M-2-j} F(\mu,v)^i\right.\notag
\\&\hspace{1.4cm} \left. + bv F(\mu,v)\sum\limits_{j=0}^{M-3} v^j\sum\limits_{i=0}^{M-3-j} F(\mu,v)^i\right]\notag
\\&\hspace{.4cm}+\frac{\mu}{\mu+\lambda}\frac{\mu+\theta+\lambda}{\theta+\lambda}\left[\sum\limits_{j=0}^{M-1} F(\theta,v)^j\sum\limits_{i=0}^{M-1-j} F(\mu,v)^i\right.\notag
\\&\hspace{3.6cm} - (F(\mu+\theta,v)+bv)\sum\limits_{j=0}^{M-2}F(\theta,v)^j\sum\limits_{i=0}^{M-2-j} F(\mu,v)^i\notag
\\&\hspace{3.6cm} \left. + bvF(\mu+\theta,v)\sum\limits_{j=0}^{M-3} F(\theta,v)^j\sum\limits_{i=0}^{M-3-j} F(\mu,v)^i\right],\label{SixthProof4}
\end{align}
where $F(x,v)=\frac{bx+\lambda}{x+\lambda}v$.
\end{prop}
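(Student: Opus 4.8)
The plan is to start from \thref{FourthTheorem}, which already gives $G_1^*=\mathcal{D}_s^{M-1}\{B_1(B_2-B_3)\}$ with $B_1,B_2,B_3$ as in \eqref{FourthProof2}, and to specialize its six arguments to $(\theta,1,v,0,0,1)$. Because $u=y=1$ forces $uvs=uvys=vs$, the denominator $\theta+\lambda g(uvs)-\lambda g(uvys)$ collapses to $\theta$, so $B_1=\frac{1}{\theta}\big(\gamma(v,0)-\gamma(vs,0)\big)$; and assumption~3, $\gamma_0\equiv 1$, reduces $B_2,B_3$ to $\frac{1}{1-\gamma(vs,0)}$ and $\frac{1}{1-\gamma(vs,\theta)}$. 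Pulling the scalar $\frac1\theta$ out of the $s$-operator, the first step reduces the claim to evaluating
\[
G_1^*(\theta,1,v,0,0,1)=\frac{1}{\theta}\,\mathcal{D}_s^{M-1}\left\{\big(\gamma(v,0)-\gamma(vs,0)\big)\left(\frac{1}{1-\gamma(vs,0)}-\frac{1}{1-\gamma(vs,\theta)}\right)\right\}.
\]

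The second step is to make every factor an explicit rational function of $s$. By \eqref{ThirdProof1}, \eqref{JointTransformOfIncrement} and the ISI property, $\gamma(z,\theta)=E\big[e^{-\theta\Delta_1}e^{\lambda\Delta_1(g(z)-1)}\big]=L(\theta+\lambda-\lambda g(z))$ with $L(\eta)=\mu/(\mu+\eta)$; inserting $g(z)=az/(1-bz)$ and $a+b=1$ gives $\lambda-\lambda g(z)=\lambda(1-z)/(1-bz)$, so $\gamma(vs,\theta)=\mu(1-bvs)\big/\big((\mu+\theta+\lambda)-(b(\mu+\theta)+\lambda)vs\big)$. The decisive move is to introduce $F(x,v)=\frac{bx+\lambda}{x+\lambda}v$, which recasts these as
\[
\gamma(vs,\theta)=\frac{\mu}{\mu+\theta+\lambda}\cdot\frac{1-bvs}{1-F(\mu+\theta,v)s},\qquad \frac{1}{1-\gamma(vs,\theta)}=\frac{\mu+\theta+\lambda}{\theta+\lambda}\cdot\frac{1-F(\mu+\theta,v)s}{1-F(\theta,v)s};
\]
setting $\theta=0$ and noting $F(0,v)=v$ produces the analogous forms for $\gamma(vs,0)$ and $\frac{1}{1-\gamma(vs,0)}$ with $F(\mu,v),v$ replacing $F(\mu+\theta,v),F(\theta,v)$, while $\gamma(v,0)=\mu(1-bv)\big/\big(\mu(1-bv)+\lambda(1-v)\big)$ is carried along symbolically as in the claim.

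The third step is to multiply out and apply $\mathcal{D}_s^{M-1}$. The product inside the braces is the sum of $\gamma(v,0)\tfrac{1}{1-\gamma(vs,0)}$, $-\gamma(v,0)\tfrac{1}{1-\gamma(vs,\theta)}$, $-\gamma(vs,0)\tfrac{1}{1-\gamma(vs,0)}$, and $\gamma(vs,0)\tfrac{1}{1-\gamma(vs,\theta)}$; in the third of these the factors $1-F(\mu,v)s$ partially cancel, leaving $\tfrac{\mu}{\lambda}\cdot\tfrac{1-bvs}{1-vs}$, while in the fourth nothing cancels. By \eqref{DInverse}, $\mathcal{D}_s^{k}(\varphi)$ is the coefficient of $s^{k}$ in $\varphi(s)/(1-s)$, i.e.\ the $k$-th partial sum of the Taylor coefficients of $\varphi$, and in particular $\mathcal{D}_s^{k}(s^{r}\varphi)=\mathcal{D}_s^{k-r}(\varphi)$. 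For a one-pole factor $\tfrac{1-cs}{1-ds}$ this equals $d^{M-1}+(1-c)\sum_{j=0}^{M-2}d^{j}$, which handles the two $\gamma(v,0)$-terms (numerator root and pole $(c,d)=(F(\mu,v),v)$ and $(F(\mu+\theta,v),F(\theta,v))$, with prefactors $\tfrac{\mu+\lambda}{\lambda}$ and $\tfrac{\mu+\theta+\lambda}{\theta+\lambda}$) and yields the two $\gamma(v,0)$-summands of \eqref{SixthProof4}. For a two-pole factor $\tfrac{(1-bvs)(1-cs)}{(1-ds)(1-es)}$ one writes $(1-bvs)(1-cs)=1-(c+bv)s+bvc\,s^{2}$ and uses $\mathcal{D}_s^{k}\!\big(\tfrac{1}{(1-ds)(1-es)}\big)=\sum_{j=0}^{k}d^{j}\sum_{i=0}^{k-j}e^{i}$; taking numerator root $c=F(\mu,v)$ with poles $F(\mu,v),v$ for the third term and $c=F(\mu+\theta,v)$ with poles $F(\mu,v),F(\theta,v)$ for the fourth produces exactly the two three-term double sums of \eqref{SixthProof4}, the monomials $s^{0},s^{1},s^{2}$ giving the upper limits $M-1$, $M-2$, $M-3$. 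Reinstating the prefactors $\tfrac{\mu}{\lambda}$ and $\tfrac{\mu}{\mu+\lambda}\cdot\tfrac{\mu+\theta+\lambda}{\theta+\lambda}$ and assembling all four contributions yields \eqref{SixthProof4}.

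The only genuinely delicate point is this last bookkeeping: in the two-pole terms one must match $v$, $F(\mu,v)$, $F(\theta,v)$, $F(\mu+\theta,v)$ to the two poles and the non-trivial numerator root correctly, and keep the index shifts $0,1,2$ aligned so the three double sums terminate at $M-1$, $M-2$, $M-3$. The $F(x,v)$ rewriting of the second step is exactly what makes those poles and roots transparent; with it in hand, the remainder is routine algebra.
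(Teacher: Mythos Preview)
Your proposal is correct and follows essentially the same route as the paper: specialize \thref{FourthTheorem} at $(\theta,1,v,0,0,1)$ to collapse $B_1$ to $\tfrac1\theta(\gamma(v,0)-\gamma(vs,0))$ and $B_2,B_3$ to $\tfrac{1}{1-\gamma(vs,\cdot)}$, rewrite $\gamma(vs,\theta)$ and $\tfrac{1}{1-\gamma(vs,\theta)}$ via $F(x,v)$, split into four terms by linearity of $\mathcal{D}_s^{M-1}$, and evaluate each using the partial-sum identities for one- and two-pole rational functions. The paper does exactly this (computing the $\theta$-dependent pieces \eqref{SixthProof2}--\eqref{SixthProof3} first and then recovering the $\theta=0$ pieces as special cases), so your observation that the third term actually simplifies to the one-pole form $\tfrac{\mu}{\lambda}\tfrac{1-bvs}{1-vs}$ is a valid extra remark the paper does not exploit, but to match \eqref{SixthProof4} you are right to present it in the uncancelled two-pole form.
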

\begin{proof}
By \thref{FourthTheorem} and the linearity of the inverse $\mathcal{D}_s^{M-1}$ (since the limit and derivative shown in \eqref{DInverse} are linear),
\begin{align}
&G_1^*(\theta,1,v,0,0,1)=\int_{t\geq 0}e^{-\theta t}E\left[v^{A_\nu}\mathbf{1}_{\{t<\tau_{\nu-1}\}}\right]\,dt\notag
\\&=\frac{1}{\theta}\mathcal{D}_s^{M-1}\left\{(\gamma(v,0)-\gamma(vs,0))\left(\frac{1}{1-\gamma(vs,0)}-\frac{1}{1-\gamma(vs,\theta)}\right)\right\}\notag
\\&=\frac{1}{\theta}\left[\gamma(v,0)\mathcal{D}_s^{M-1}\left\{\frac{1}{1-\gamma(vs,0)}\right\}-\mathcal{D}_s^{M-1}\left\{\frac{\gamma(vs,0)}{1-\gamma(vs,0)}\right\}\right.\notag
\\&\hspace{1cm}\left. -\gamma(v,0)\mathcal{D}_s^{M-1}\left\{\frac{1}{1-\gamma(vs,\theta)}\right\}+\mathcal{D}_s^{M-1}\left\{\frac{\gamma(vs,0)}{1-\gamma(vs,\theta)}\right\}\right].\label{SixthProof1}
\end{align}
Using the result of Appendix~\ref{AppendixA} and assumptions 1-3, we first manipulate some terms into more convenient forms,
\begin{align*}
&\frac{1}{1-\gamma(vs,\theta)}=\frac{1}{1-\frac{\mu}{\mu+\theta+\lambda-\lambda g(vs)}}=\frac{\mu+\theta+\lambda-\lambda\frac{avs}{1-bvs}}{\theta+\lambda-\lambda\frac{avs}{1-bvs}}=\frac{\mu+\theta+\lambda}{\theta+\lambda}\frac{1-F(\mu+\theta,v)s}{1-F(\theta,v)s}
\\&\gamma(vs,0)=\frac{\mu}{\mu+\lambda-\lambda g(vs)}=\frac{\mu}{\mu+\lambda}\frac{1-bvs}{1-F(\mu,v)s}.
\end{align*}
The term ${\mathcal{D}_s^{M-1}\left\{\frac{1}{1-\gamma(vs,\theta)}\right\}}$ from the last line of \eqref{SixthProof1} simplifies as follows,
\begin{align}
&\mathcal{D}_s^{M-1}\left\{\frac{1}{1-\gamma(vs,\theta)}\right\}=\frac{\mu+\theta+\lambda}{\theta+\lambda}\mathcal{D}_s^{M-1}\left\{\frac{1-F(\mu+\theta,v)s}{1-F(\theta,v)s}\right\}\notag
\\&=\frac{\mu+\theta+\lambda}{\theta+\lambda}\left[\mathcal{D}_s^{M-1}\left\{\frac{1}{1-F(\theta,v)s}\right\}-F(\mu+\theta,v)\mathcal{D}_s^{M-2}\left\{\frac{1}{1-F(\theta,v)s}\right\}\right]\notag
\\&=\frac{\mu+\theta+\lambda}{\theta+\lambda}\left[\sum\limits_{j=0}^{M-1} F(\theta,v)^j-F(\mu+\theta,v)\sum\limits_{j=0}^{M-2}F(\theta,v)^j\right]\notag
\\&=\frac{\mu+\theta+\lambda}{\theta+\lambda}\left[F(\theta,v)^{M-1}+(1-F(\mu+\theta,v))\sum\limits_{j=0}^{M-2}F(\theta,v)^j\right].\label{SixthProof2}
\end{align}
The term $\mathcal{D}_s^{M-1}\left\{\frac{\gamma(vs,0)}{1-\gamma(vs,\theta)}\right\}$ from the last line of \eqref{SixthProof1} simplifies as follows,
\begin{align}
&\mathcal{D}_s^{M-1}\left\{\frac{\gamma(vs,0)}{1-\gamma(vs,\theta)}\right\}=\frac{\mu}{\mu+\lambda}\frac{\mu+\theta+\lambda}{\theta+\lambda}\mathcal{D}_s^{M-1}\left\{\frac{1-bvs}{1-F(\mu,v)s}\frac{1-F(\mu+\theta,v)s}{1-F(\theta,v)s}\right\}\notag
\\&=\frac{\mu}{\mu+\lambda}\frac{\mu+\theta+\lambda}{\theta+\lambda}\mathcal{D}_s^{M-1}\left\{\frac{1-(F(\mu+\theta,v)+bv)s+bvF(\mu+\theta,v)s^2}{(1-F(\mu,v)s)(1-F(\theta,v)s)}\right\}\notag
\\&=\frac{\mu}{\mu+\lambda}\frac{\mu+\theta+\lambda}{\theta+\lambda}\left[\mathcal{D}_s^{M-1}\left\{\frac{1}{(1-F(\mu,v)s)(1-F(\theta,v)s)}\right\}\right.\notag
\\&\hspace{3.2cm}-(F(\mu+\theta,v)+bv)\mathcal{D}_s^{M-2}\left\{\frac{1}{(1-F(\mu,v)s)(1-F(\theta,v)s)}\right\}\notag
\\&\hspace{3.2cm}\left. +bvF(\mu+\theta,v)\mathcal{D}_s^{M-3}\left\{\frac{1}{(1-F(\mu,v)s)(1-F(\theta,v)s)}\right\}\right]\notag
\\&=\frac{\mu}{\mu+\lambda}\frac{\mu+\theta+\lambda}{\theta+\lambda}\left[\sum\limits_{j=0}^{M-1}F(\theta,v)^j\sum\limits_{i=0}^{M-1-j}F(\mu,v)^i\right.\notag
\\&\hspace{3.2cm}-(F(\mu+\theta,v)+bv)\sum\limits_{j=0}^{M-2}F(\theta,v)^j\sum\limits_{i=0}^{M-2-j}F(\mu,v)^i\notag
\\&\hspace{3.2cm}\left. +bvF(\mu+\theta,v)\sum\limits_{j=0}^{M-3}F(\theta,v)^j\sum\limits_{i=0}^{M-3-j}F(\mu,v)^i\right].\label{SixthProof3}
\end{align}
The two terms $\mathcal{D}_s^{M-1}\left\{\frac{1}{1-\gamma(vs,0)}\right\}$ and $\mathcal{D}_s^{M-1}\left\{\frac{\gamma(vs,0)}{1-\gamma(vs,\theta)}\right\}$ are special cases of \eqref{SixthProof2} and \eqref{SixthProof3}, respectively, and follow trivially with $\theta=0$. Plugging expressions \eqref{SixthProof2}-\eqref{SixthProof3} and those that follow for the other two terms into \eqref{SixthProof1} yields \eqref{SixthProof4}, as required.
\end{proof}

The next step is to invert the Laplace transform to find ${E\left[v^{A_\nu}\mathbf{1}_{\{t<\tau_{\nu-1}\}}\right]}$, which appears similar to the PGF of $A_\nu$ except for the inclusion an of indicator function restricting it to the set $\{t<\tau_{\nu-1}\}$ as a factor within the expectation.

\begin{prop}\thlabel{SeventhTheorem}
Under assumptions 1-3, the expectation $E\left[v^{A_\nu}\mathbf{1}_{\{t<\tau_{\nu-1}\}}\right]$ satisfies
\begin{align}
&E\left[v^{A_\nu}\mathbf{1}_{\{t<\tau_{\nu-1}\}}\right]\notag
\\&=\gamma(v,0)\frac{\mu+\lambda}{\lambda}\left[v^{M-1}+(1-F(\mu,v))\sum\limits_{j=0}^{M-2}v^j\right]-\gamma(v,0)\left[v^{M-1}G_{M-1}+\sum\limits_{j=0}^{M-2}(v^jG_j+v^{j+1}H_j)\right]\notag
\\&\hspace{.5cm}-\frac{\mu}{\lambda}\left[\sum\limits_{j=0}^{M-1}v^j\sum\limits_{i=0}^{M-1-j}F(\mu,v)^i - (F(\mu,v)+bv)\sum\limits_{j=0}^{M-2}v^j\sum\limits_{i=0}^{M-2-j}F(\mu,v)^i\right.\notag
\\&\hspace{1.5cm}\left.+bvF(\mu,v)\sum\limits_{j=0}^{M-3}v^j\sum\limits_{i=0}^{M-3-j} F(\mu,v)^i\right]\notag
\\&\hspace{.5cm}+\frac{\mu}{\mu+\lambda}\left[\sum\limits_{j=0}^{M-1}v^jG_j\sum\limits_{i=0}^{M-1-j}F(\mu,v)^i-\sum\limits_{j=0}^{M-2}v^{j+1}(bG_j+H_j)\sum\limits_{i=0}^{M-2-j}F(\mu,v)^i\right.\notag
\\&\hspace{1.7cm}\left.+b\sum\limits_{j=0}^{M-3}v^{j+2}H_j\sum\limits_{i=0}^{M-3-j}F(\mu,v)^i\right]\label{SeventhProof5}
\end{align}
where 
\begin{align}
&G_j=b^j\sum\limits_{k=0}^j\binom{j}{k}\left(\frac{a}{b}\right)^k\left[P(k,\lambda t)+\frac{\mu}{\lambda}P(k+1,\lambda t)\right]
\\&H_j=b^{j+1}\sum\limits_{k=0}^j\binom{j}{k}\left(\frac{a}{b}\right)^k\left[P(k,\lambda t)+\left(\frac{\mu}{\lambda}+\frac{a}{b}\right)P(k+1,\lambda t)\right]
\end{align}
and $P(k,\lambda t)=1-\frac{\Gamma(k,\lambda)}{\Gamma(k)}$ is the lower regularized gamma function.
\begin{proof}
The required result can be found by applying the inverse Laplace transform with respect to $\theta$ to the result of \thref{SixthTheorem}, i.e. $\mathcal{L}_\theta^{-1}\left\{G_1^*(\theta,1,v,0,0,1)\right\}(t)$. By linearity, it can be seen that all non-trivial Laplace inverses are taken of terms of the following two forms,
\begin{align}
&\frac{1}{\theta}\frac{\mu+\theta+\lambda}{\theta+\lambda}F(\theta,v)^j=\frac{1}{\theta}\frac{\mu+\theta+\lambda}{\theta+\lambda}\left(\frac{b\theta+\lambda}{\theta+\lambda}\right)^j v^j,\,j=0,1,...,M-1\label{SeventhProof1}
\\&\frac{1}{\theta}\frac{\mu+\theta+\lambda}{\theta+\lambda}F(\mu+\theta,v)F(\theta,v)^j=\frac{1}{\theta}\frac{b\theta+b\mu+\lambda}{\theta+\lambda}\left(\frac{b\theta+\lambda}{\theta+\lambda}\right)^j v^{j+1},\,j=0,1,...,M-1.\label{SeventhProof2}
\end{align}
The inverse Laplace transform of \eqref{SeventhProof1} is
\begin{align}
&v^j\mathcal{L}_\theta^{-1}\left\{\frac{1}{\theta}\frac{\mu+\theta+\lambda}{\theta+\lambda}\left(\frac{b\theta+\lambda}{\theta+\lambda}\right)^j\right\}(t)=v^j e^{-\lambda t}\mathcal{L}_\theta^{-1}\left\{\frac{1}{\theta-\lambda}\frac{\mu+\theta}{\theta}\left(\frac{a\lambda+b\theta}{\theta}\right)^j\right\}(t)\notag
\\&=(bv)^je^{-\lambda t}\sum\limits_{k=0}^j\binom{j}{k}\left(\frac{a\lambda}{b}\right)^k\mathcal{L}_\theta^{-1}\left\{\frac{\theta+\mu}{\theta^{k+1}(\theta-\lambda)}\right\}(t)\notag
\\&=(bv)^je^{-\lambda t}\sum\limits_{k=0}^j\binom{j}{k}\left(\frac{a\lambda}{b}\right)^k\frac{e^{\lambda t}}{\lambda^k}\left[P(k,\lambda t)+\frac{\mu}{\lambda}P(k+1,\lambda t)\right]\notag
\\&=(bv)^j\sum\limits_{k=0}^j\binom{j}{k}\left(\frac{a}{b}\right)^k\left[P(k,\lambda t)+\frac{\mu}{\lambda}P(k+1,\lambda t)\right]=v^jG_j.\label{SeventhProof3}
\end{align}
The inverse Laplace transform of \eqref{SeventhProof2} is
\begin{align}
&v^{j+1}\mathcal{L}_\theta^{-1}\left\{\frac{1}{\theta}\frac{\mu+\theta+\lambda}{\theta+\lambda}\left(\frac{b\theta+b\mu+\lambda}{\theta+\mu+\lambda}\right)\left(\frac{b\theta+\lambda}{\theta+\lambda}\right)^j\right\}(t)\notag
\\&=v^{j+1}e^{-\lambda t}\mathcal{L}_\theta^{-1}\left\{\frac{b\theta+b\mu+a\lambda}{\theta+\mu+\lambda}\frac{(b\theta+a\lambda)^j}{\theta^{j+1}}\right\}(t)\notag
\\&=(bv)^{j+1}e^{-\lambda t}\sum\limits_{k=0}^j\binom{j}{k}\left(\frac{a\lambda}{b}\right)^k\left[\mathcal{L}_\theta^{-1}\left\{\frac{1}{\theta^k(\theta-\lambda)}\right\}(t)+\left(\mu+\frac{a\lambda}{b}\right)\mathcal{L}_\theta^{-1}\left\{\frac{1}{\theta^{k+1}(\theta-\lambda)}\right\}(t)\right]\notag
\\&=(bv)^{j+1}e^{-\lambda t}\sum\limits_{k=0}^j\binom{j}{k}\left(\frac{a}{b}\right)^k\left[P(k,\lambda t)+\left(\frac{\mu}{\lambda}+\frac{a}{b}\right)P(k+1,\lambda t)\right]=v^{j+1}H_j.\label{SeventhProof4}
\end{align}
Therefore, applying the inverse Laplace transform to \eqref{SixthProof4} reduces it to a linear combination of results of the forms \eqref{SeventhProof3} and \eqref{SeventhProof4}, yielding the result of the proposition, \eqref{SeventhProof5}.
\end{proof}
\end{prop}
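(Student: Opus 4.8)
The plan is to invert the Laplace transform in $\theta$ of the closed form for $G_1^*(\theta,1,v,0,0,1)$ supplied by \thref{SixthTheorem}; that is, to evaluate $\mathcal{L}_\theta^{-1}\{G_1^*(\theta,1,v,0,0,1)\}(t)$, using linearity of $\mathcal{L}_\theta^{-1}$ to break the computation into a short list of elementary inversions. After substituting assumptions 1--3 ($g(z)=az/(1-bz)$, $L(z)=\mu/(\mu+z)$, $\gamma_0\equiv 1$) into \eqref{SixthProof4} and expanding the finite sums produced by the $\mathcal{D}_s$-operators, every summand of $G_1^*$ has one of three shapes: a quantity free of $\theta$ divided by $\theta$, whose inverse is that same quantity (now read as a function of $v$ and $t$); a constant times $\frac1\theta\frac{\mu+\theta+\lambda}{\theta+\lambda}F(\theta,v)^j$; or a constant times $\frac1\theta\frac{\mu+\theta+\lambda}{\theta+\lambda}F(\mu+\theta,v)F(\theta,v)^j$, with $j$ ranging over $0,\dots,M-1$. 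The $\theta$-free summands immediately produce the blocks of \eqref{SeventhProof5} that carry no $G_j$ or $H_j$ --- the $\gamma(v,0)\frac{\mu+\lambda}{\lambda}[\,\cdots]$ line and the $-\frac\mu\lambda[\,\cdots]$ line --- so the substance of the proof is the inversion of the two rational building blocks \eqref{SeventhProof1} and \eqref{SeventhProof2}.

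For those, I would first factor out $v^j$ (respectively $v^{j+1}$), then apply the frequency-shift rule $\mathcal{L}_\theta^{-1}\{H(\theta)\}(t)=e^{-\lambda t}\mathcal{L}_\theta^{-1}\{H(\theta-\lambda)\}(t)$, which turns $\frac{b\theta+\lambda}{\theta+\lambda}$ into $\frac{a\lambda+b\theta}{\theta}$ (and the $\mu$-shifted factor into its analogue), binomially expand $\bigl(\frac{a\lambda+b\theta}{\theta}\bigr)^j=b^j\sum_{k=0}^j\binom jk\bigl(\frac{a\lambda}{b}\bigr)^k\theta^{-k}$, and thereby reduce everything to the two scalar inversions $\mathcal{L}_\theta^{-1}\bigl\{\frac{\theta+\mu}{\theta^{k+1}(\theta-\lambda)}\bigr\}$ and $\mathcal{L}_\theta^{-1}\bigl\{\frac{1}{\theta^{k}(\theta-\lambda)}\bigr\}$. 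Each of these is obtained by partial fractions; the outcome is $e^{\lambda t}$ minus a truncated exponential polynomial in $t$, which I would recognize --- via the identity $P(k,\lambda t)=1-e^{-\lambda t}\sum_{m=0}^{k-1}(\lambda t)^m/m!$ for the regularized lower incomplete gamma function --- as $\frac{e^{\lambda t}}{\lambda^k}P(k,\lambda t)$. The factors $e^{\pm\lambda t}$ then cancel against the leading $e^{-\lambda t}$, and the powers of $\lambda$ cancel against $(a\lambda/b)^k$, leaving exactly the closed forms for $G_j$ and $H_j$ declared in the statement; this is the content of \eqref{SeventhProof3}--\eqref{SeventhProof4}.

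The last step is to reassemble: substitute $v^jG_j$ and $v^{j+1}H_j$ back into the four groups of \eqref{SixthProof4}, carrying along all $\theta$-independent coefficients --- $\gamma(v,0)$, $\mu/\lambda$, $\mu/(\mu+\lambda)$, the nested $\sum_i F(\mu,v)^i$ factors, and the stray $bv$ produced when $\gamma(vs,0)/(1-\gamma(vs,\theta))$ is written out --- and then collect like terms; in particular $v^{j+1}H_j$ merges with $bv\cdot v^jG_j$ into $v^{j+1}(bG_j+H_j)$, while the piece carrying $bvF(\mu+\theta,v)$ contributes the $b\,v^{j+2}H_j$ sum, which together yield \eqref{SeventhProof5}. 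I expect the main obstacle to lie in the two scalar partial-fraction inversions and their re-expression in closed form through $P(k,\lambda t)$, and, secondarily, in the bookkeeping needed to keep the three upper limits $M-1$, $M-2$, $M-3$ --- inherited from the operators $\mathcal{D}_s^{M-1},\mathcal{D}_s^{M-2},\mathcal{D}_s^{M-3}$ of \thref{SixthTheorem} --- correctly attached to each nested finite sum in the final expression.
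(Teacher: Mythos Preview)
Your proposal is correct and follows essentially the same approach as the paper: inverse Laplace transform the expression from \thref{SixthTheorem} term by term, reduce the nontrivial pieces to the two rational templates \eqref{SeventhProof1}--\eqref{SeventhProof2}, invert each via a frequency shift $\theta\mapsto\theta-\lambda$, binomial expansion of $(a\lambda+b\theta)^j/\theta^j$, and partial-fraction inversion of $1/(\theta^{k}(\theta-\lambda))$ recognized as $e^{\lambda t}P(k,\lambda t)/\lambda^k$, then reassemble. If anything, your write-up is more explicit than the paper's about the $\theta$-free summands and about the final bookkeeping that merges $bv\cdot v^jG_j$ with $v^{j+1}H_j$.
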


From this result, one further step will yield the explicit joint distribution of $A_\nu$ and $\tau_{\nu-1}$.

\begin{prop}\thlabel{EighthTheorem}
The joint distribution of the pre-first observed passage time, $\tau_{\nu-1}$, and the crossing level of the process, $A_\nu$, is
\begin{align}
&P\left\{A_\nu=r,\tau_{\nu-1}>t\right\}\notag
\\&=\frac{\mu}{\lambda}\frac{a\mu}{\mu+\lambda}\left[\frac{\mu+\lambda}{a\mu}R_{0r}+\sum\limits_{j=1}^{M-1}R_{jr}\right]-\frac{\mu}{\mu+\lambda}\left[G_0R_{0r}+\sum\limits_{j=1}^{M-1}(G_j-H_{j-1})R_{jr}\right]\notag
\\&\hspace{.5cm}-\frac{\mu}{\lambda}\left[\sum\limits_{j=0}^{M-1}\sum\limits_{i=0}^{M-1-j}c^i\mathbf{1}_{\{r=i+j\}}-(b+c)\sum\limits_{j=0}^{M-2}\sum\limits_{i=0}^{M-2-j}c^i\mathbf{1}_{\{r=i+j+1\}}+bc\sum\limits_{j=0}^{M-3}\sum\limits_{i=0}^{M-3-j}c^i\mathbf{1}_{\{r=1+j+2\}}\right]\notag
\\&\hspace{.5cm}+\frac{\mu}{\mu+\lambda}\left[\sum\limits_{j=0}^{M-1}G_j\sum\limits_{i=0}^{M-1-j}c^i\mathbf{1}_{\{r=i+j\}}-\sum\limits_{j=0}^{M-2}(bG_j+H_j)\sum\limits_{i=0}^{M-2-j}c^i\mathbf{1}_{\{r=i+j+1\}}\right.\notag
\\&\hspace{2.1cm}\left.+b\sum\limits_{j=0}^{M-3}H_j\sum\limits_{i=0}^{M-3-j}c^i\mathbf{1}_{\{r=1+j+2\}}\right]\label{EighthProof3}
\end{align}
where
\begin{align}
&c=F(\mu,1)=\frac{b\mu+\lambda}{\mu+\lambda}
\\&R_{jr}=\left\{
     \begin{array}{lr}
       0, & \text{if } r<j\\
       1, & \text{if } r=j\\
       (c-b)c^{r-j-1}, & \text{if } r>j
     \end{array}
   \right.\label{EighthProof1}
\end{align}
\end{prop}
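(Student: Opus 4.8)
The plan is to obtain \eqref{EighthProof3} as the coefficient of $v^{r}$ in the generating object furnished by \thref{SeventhTheorem}. Since $\mathbf{1}_{\{t<\tau_{\nu-1}\}}=\mathbf{1}_{\{\tau_{\nu-1}>t\}}$ and $A_\nu$ is integer-valued, the map $v\mapsto E\!\left[v^{A_\nu}\mathbf{1}_{\{\tau_{\nu-1}>t\}}\right]$ is a power series in $v$ whose $r$th coefficient is exactly $P\{A_\nu=r,\tau_{\nu-1}>t\}$. Hence it suffices to extract $[v^{r}]$ from the right-hand side of \eqref{SeventhProof5} term by term, and match against \eqref{EighthProof3}.

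First I would record the two elementary facts that power the extraction. Under assumptions 1--3 one has $\gamma(v,0)=L\!\left(\lambda-\lambda g(v)\right)=\dfrac{\mu}{\mu+\lambda-\lambda g(v)}=\dfrac{\mu}{\mu+\lambda}\cdot\dfrac{1-bv}{1-cv}$, where $c=F(\mu,1)=\dfrac{b\mu+\lambda}{\mu+\lambda}$ and $1-c=\dfrac{a\mu}{\mu+\lambda}$, while $F(\mu,v)=cv$. Expanding $\dfrac{1-bv}{1-cv}=(1-bv)\sum_{k\ge 0}c^{k}v^{k}=1+\sum_{k\ge 1}(c-b)c^{k-1}v^{k}$ shows that $R_{jr}$ from \eqref{EighthProof1} is precisely the coefficient of $v^{r}$ in $v^{j}\,\dfrac{1-bv}{1-cv}$; consequently $[v^{r}]\!\left(\gamma(v,0)\,v^{j}\right)=\dfrac{\mu}{\mu+\lambda}R_{jr}$. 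On the other hand, $[v^{r}]\!\left(v^{m}F(\mu,v)^{i}\right)=c^{i}\mathbf{1}_{\{r=m+i\}}$, and since $G_{j}$ and $H_{j}$ carry no dependence on $v$ they pass unchanged through the coefficient extraction.

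With these in hand I would process the four blocks of \eqref{SeventhProof5}. In the first $\gamma(v,0)$-block, collapse $v^{M-1}+\bigl(1-F(\mu,v)\bigr)\sum_{j=0}^{M-2}v^{j}=1+(1-c)\sum_{j=1}^{M-1}v^{j}$, then apply $1-c=\dfrac{a\mu}{\mu+\lambda}$ together with $[v^{r}](\gamma(v,0)v^{j})=\tfrac{\mu}{\mu+\lambda}R_{jr}$ to recover the first summand of \eqref{EighthProof3}. In the second $\gamma(v,0)$-block, re-index $\sum_{j=0}^{M-2}v^{j+1}H_{j}=\sum_{j=1}^{M-1}v^{j}H_{j-1}$, merge with $\sum_{j=0}^{M-1}v^{j}G_{j}$, and apply the same rule to obtain the $G_{0}R_{0r}+\sum_{j\ge 1}(G_{j}-H_{j-1})R_{jr}$ term. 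In the two remaining blocks --- the $\mu/\lambda$-block, which contains only powers of $v$ and of $F(\mu,v)$, and the $\mu/(\mu+\lambda)$-block, which additionally carries $G_{j}$ and $H_{j}$ --- substituting $F(\mu,v)^{i}=c^{i}v^{i}$ turns every product $v^{j+a}F(\mu,v)^{i}$ (with $a\in\{0,1,2\}$) into $c^{i}\mathbf{1}_{\{r=i+j+a\}}$, the $G_{j},H_{j}$ again riding along; this reproduces the double-sum lines of \eqref{EighthProof3} line for line. Summing the four contributions gives \eqref{EighthProof3}.

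The argument introduces no new idea beyond coefficient matching, so the principal obstacle is purely organizational: performing the shifted re-indexing of the $H_{j}$-, $bvF(\mu,v)$-, and $\bigl(F(\mu,v)+bv\bigr)$-terms so that the indicator arguments land on $r=i+j$, $r=i+j+1$, and $r=i+j+2$ in the correct blocks; tracking signs consistently through the $H_{j-1}$ terms; and keeping each scalar prefactor ($\mu/\lambda$, $\mu/(\mu+\lambda)$, and the $a\mu/(\mu+\lambda)$ produced by $1-c$) attached to its own block. Once every coefficient is lined up against \eqref{EighthProof1}, the identification with \eqref{EighthProof3} is immediate.
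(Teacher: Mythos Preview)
Your proposal is correct and follows essentially the same route as the paper: both argue that $P\{A_\nu=r,\tau_{\nu-1}>t\}$ is the $r$th Taylor coefficient of the expression in \thref{SeventhTheorem}, rewrite $\gamma(v,0)=\tfrac{\mu}{\mu+\lambda}\,\tfrac{1-bv}{1-cv}$ and $F(\mu,v)=cv$, identify $R_{jr}$ as the coefficient of $v^{r}$ in $v^{j}\tfrac{1-bv}{1-cv}$, and read off the remaining monomials as indicators. The only cosmetic difference is notation (your $[v^{r}]$ versus the paper's $\tfrac{1}{r!}\lim_{v\to 0}\partial_v^{r}$), and you spell out the telescoping and re-indexing of the first two blocks a bit more explicitly than the paper does.
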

\begin{proof}
Since the result of \thref{SeventhTheorem} is a PGF of $A_\nu$ (restricted to $\{t<\tau_{\nu-1}\}$), we can find the joint distribution as
\begin{align*}
P\{A_\nu=r,\tau_{\nu-1}>t\}=\frac{1}{r!}\lim\limits_{v\rightarrow 0}\frac{\partial^r}{\partial v^r}E\left[v^{A_\nu}\mathbf{1}_{\{t<\tau_{\nu-1}\}}\right].
\end{align*}
We have
\begin{align*}
\gamma(v,0)=\frac{\mu}{\mu+\lambda-\lambda g(v)}=\frac{\mu}{\mu+\lambda}\frac{1-bv}{1-\frac{b\mu+\lambda}{\mu+\lambda}v}=\frac{\mu}{\mu+\lambda}\frac{1-bv}{1-cv}.
\end{align*}
As such, by the linearity of derivatives and limits, all $v$-dependent terms fall into the form $\frac{1-bv}{1-cv}v^j$ or $v^j$, so we find (noting $\|cu\|<1$)
\begin{align*}
R_{jr}&=\frac{1}{r!}\lim\limits_{v\rightarrow 0}\frac{
\partial^r}{\partial v^r}\left(\frac{1-bv}{1-cv}v^r\right)=\frac{1}{r!}\lim\limits_{v\rightarrow 0}\frac{
\partial^r}{\partial v^r}\left(v^j(1-bv)\sum\limits_{i=0}^\infty (cv)^i\right)
\\&=\sum\limits_{i=0}^\infty c^i\frac{1}{r!}\lim\limits_{v\rightarrow 0}\frac{\partial^r}{\partial v^r}\left(v^{i+j}\right)-b\sum\limits_{i=0}^\infty c^i\frac{1}{r!}\lim\limits_{v\rightarrow 0}\frac{\partial^r}{\partial v^r}\left(v^{i+j+1}\right)
\\&=\left\{
     \begin{array}{lr}
       0, & \text{if } r<j\\
       1, & \text{if } r=j\\
       (c-b)c^{r-j-1}, & \text{if } r>j
     \end{array}
   \right.
\end{align*}
and
\begin{align}
\frac{1}{r!}\lim\limits_{v\rightarrow 0}\frac{\partial^r}{\partial v^r}\left(v^j\right)=\mathbf{1}_{\{j=r\}}.\label{EighthProof2}
\end{align}
Therefore, taking the appropriate limit and derivatives of the PGF \eqref{SeventhProof5} of \thref{SeventhTheorem} yields a linear combination of terms of the forms \eqref{EighthProof1} and \eqref{EighthProof2}, yielding \eqref{EighthProof3} as required.
\end{proof}

The process of deriving the joint distribution consisted of (a) simplifying \thref{FourthTheorem} to the marginal transform $G_1^*(\theta,1,v,0,0,1)=\int_{t\geq 0}e^{-\theta t}E\left[v^{A_\nu}\mathbf{1}_{\{t<\tau_{\nu-1}\}}\right]\,dt$ under assumptions 1-3 above (\thref{SixthTheorem}), (b) taking the inverse Laplace transform to find $E\left[v^{A_\nu}\mathbf{1}_{\{t<\tau_{\nu-1}\}}\right]$ (\thref{SeventhTheorem}), and (c) finding the joint probability distribution $P\{A_\nu=r,\tau_{\nu-1}>t\}$ (\thref{EighthTheorem}). Any such joint distribution of $A_{\nu-1}$, $A(t)$, or $A_\nu$ with $\tau_{\nu-1}$ or $\tau_\nu$ (via \thref{Corollary}) can be derived through a procedure with the same three general steps.

\section*{Acknowledgement}

The authors are very grateful to the anonymous referee whose numerous remarks and suggestions greatly improved the mathematical contents of the paper, as well as its overall readability.

\bibliographystyle{plain}
\bibliography{TSAISIP}

\appendix
\begin{appendices}
\section{Simplification of $\gamma(z,\theta)$ for a Marked Poisson Process}\label{AppendixA}

\begin{proof}
If the underlying process $A(t)$ is a marked Poisson process, the joint PGF and LST of an observed increment $X_1$ and the inter-observation time $\Delta_1$ will simply as follows by the known property of the conditional expectation,
\begin{align*}
\gamma(z,\theta) = E\left[z^{X_1}e^{-\theta\Delta_1}\right] = E\left[e^{-\theta\Delta_1}E\left[z^{X_1}\big|\Delta_1\right]\right] = E\left[e^{-\theta\Delta_1} E\left[z^{a_1+...+a_N}\big|\Delta_1\right]\right],
\end{align*}
where $N$ is the number of Poisson arrivals occurring within a time interval of length $\Delta_1$. Since the Poisson process has rate $\lambda$, $N$ is a Poisson random variable with parameter $\lambda\Delta_1$, then we have
\begin{align*}
\gamma(z,\theta) = E\left[e^{-\theta\Delta_1}e^{-\left(\lambda g(z) - \lambda\right)\Delta_1}\right] = E\left[e^{-\left(\theta + \lambda - \lambda g(z)\right)\Delta_1}\right] = L\left(\theta + \lambda - \lambda g(z)\right).
\end{align*}
\end{proof}

\section{Proof that $\|L\left(\theta + \lambda + \lambda g(z)\right)\|<1$}\label{AppendixB}

Herein, we will show the norm of the joint PGF and LST of an observed increment $X_1$ and the inter-observation time $\Delta_1$, ${\gamma(z,\theta)=E\left[z^{X_1}e^{-\theta\Delta_1}\right]=L(\theta+\lambda-\lambda g(z))}$ (as shown in Appendix~\ref{AppendixA}) is strictly bounded by 1 so that the geometric series will converge in the proofs above under a minor condition imposed on the parameters.

Recall that by definition, $\lambda>0$, $Re(\theta)\geq 0$ (it could be zero if we seek the marginal PGF of $X_1$), and $\|z\|\leq 1$ (it could be 1 if we seek the marginal LST of $\Delta_1$). The convergence will occur if $\|z\|<1$ or $Re(\theta)>0$ (i.e. if either inequality is strict), as we show below.

\begin{proof}
Let $\eta=\theta+\lambda-\lambda g(z)$, then
\begin{align*}
\|\gamma(z,\theta)\|&=\|L(\theta+\lambda-\lambda g(z))\|=\|L(\eta)\left\|=\|E\left[e^{-\eta\Delta_1}\right]\right\|=\left\|\int_\Omega e^{-\eta\Delta_1}\,dP\right\|\notag
\\&\leq \int_\Omega\left\|e^{-\eta\Delta_1}\right\|\,dP\leq \int_{\{\Delta_1\leq 1\}} e^{-Re(\eta)\Delta_1}\,dP+\int_{\{\Delta_1>1\}} e^{-Re(\eta)\Delta_1}\,dP\notag
\\&\leq \int_{\{\Delta_1\leq 1\}}\,dP+e^{-Re(\eta)\Delta_1}\int_{\{\Delta_1>1\}}\,dP=P\{\Delta_1\leq 1\}+e^{-Re(\eta)}\left(1-P\{\Delta_1\leq 1\}\right).
\end{align*}
Notice that $\left\|L(\eta)\right\|<1$ if and only if $e^{-Re(\eta)}<1$, which is true if and only if $Re(\eta)>0$.

Assume that $Re(\theta)\geq 0$ and $\|z\|\leq 1$. We will show that exactly one of these inequalities must be strict in order to ensure $Re(\eta)>0$ (and, thus, $|\gamma(z,\theta)\|<1$).

\underline{Case 1}: $Re(\theta)>0$, $\|z\|=1$
\begin{align}
Re(\eta)=Re(\theta)+\lambda(1-Re(g(z))).\label{BProof2}
\end{align}
Therefore, if $Re(g(z))\leq 1$ (and of course the real part of the PGF is nonnegative since for each $k$, $a_k>0$), we have $Re(\eta)>0$ as required. Since $g(z)$ is analytic and $\|g(z)\|\leq 1$ on the closed unit ball in the complex plane and $g(0)=0$, the Schwarz Lemma implies $\|g(z)\|\leq\|z\|$ in the closed unit ball, so we have
\begin{align}
0\leq Re(g(z))\leq \|g(z)\|\leq \|z\|=1.\label{BProof3}
\end{align}
Thus, the result of \eqref{BProof3} in \eqref{BProof2} implies that ${Re(\eta)=Re(\theta)+\lambda(1-Re(g(z)))\geq Re(\theta)}$. Since ${Re(\theta)>0}$ by the assumption, ${\|L(\theta+\lambda-\lambda g(z))\|<1}$.

\underline{Case 2}: $Re(\theta)=0$, $\|z\|<1$
\begin{align*}
Re(\eta) = \lambda(1-Re(g(z))).
\end{align*}
In this case, we again have ${Re(g(z))\leq\|z\|}$, but in this case we also have $\|z\|<1$, so $Re(\eta)>0$. Therefore, ${\|\gamma(z,\theta)\|=\|L(\theta+\lambda-\lambda g(z))\|<1}$.
\end{proof}

\end{appendices}
\end{document}